\newcommand{\E}{\mathbb{E}}
\renewcommand{\H}{\mathbb{H}}
\newcommand{\R}{\mathbb{R}}
\renewcommand{\line}[1]{\overleftrightarrow{#1}}
\renewcommand{\S}{\mathbb{S}}
\renewcommand{\H}{\mathbb{H}}
\DeclareMathOperator{\gsin}{gsin}
\theoremstyle{theorem}
\newtheorem{theorem}{Theorem}
\newtheorem{lemma}[theorem]{Lemma}
\newtheorem{corollary}[theorem]{Corollary}
\theoremstyle{definition}
\newtheorem*{definition}{Definition}
\newtheorem*{remark}{Remark}
\begin{document}

\title{Non-Euclidean Cross-Ratios and Carnot's Theorem for Conics}

\author{Michael Perez Palapa\footnote{Independent Researcher}, Kai Williams\footnote{Swarthmore College}\\               
kawilliams@protonmail.com}                      

\maketitle

Despite significant work done in non-Euclidean geometries from a synthetic perspective, we still know far more about planar Euclidean geometry ($\E^2$) than even the nicer non-Euclidean geometries: spherical ($\S^2$) and hyperbolic ($\H^2$). This presents an opportunity: given the deep connections between these three different geometries, can we use facts we know to be true in Euclidean geometry to obtain results in spherical and hyperbolic geometry? 

In some sense, this idea is the motivation for using models to study hyperbolic geometry (or more broadly, for the extensive use of local coordinates in differential geometry). However, we can take this idea further: \cite{ProjectiveShadowBook} and \cite{ProjectiveShadows}  use the idea of a ``projective shadow." Projective geometry, which has a rich history dating back to ancient times, is a type of geometry which contains Euclidean, spherical, and hyperbolic geometry. It concerns itself with properties invariant under projective transformations, such as conics and collinearity (for more, see \cite{Coxeter}). Because projective geometry is a more fundamental geometry, one can often take theorems from it to find analogous results -- ``shadows" -- in spherical and hyperbolic geometry. These shadows are often quite distinct from the original results and their proofs can require considersible creativity, but this is still a fruitful perspective from which to consider spherical and hyperbolic geometry.

In this paper, we find a particularly important shadow of a fundamental tool in projective geometry -- the cross-ratio -- and use it to extend many theorems true in Euclidean and projective geometry directly into spherical and hyperbolic geometry. After discussing preliminary background, we will develop a notion of the cross-ratio. Alongside the use of central projection (or alternatively, the Beltrami-Klein model), this will allow us to generalize several theorems in section 2 and section 3, including a classic result: Carnot's Theorem for Conics.

\begin{center}
    \begin{figure}[H]
        \centering
        \includegraphics[scale=0.8]{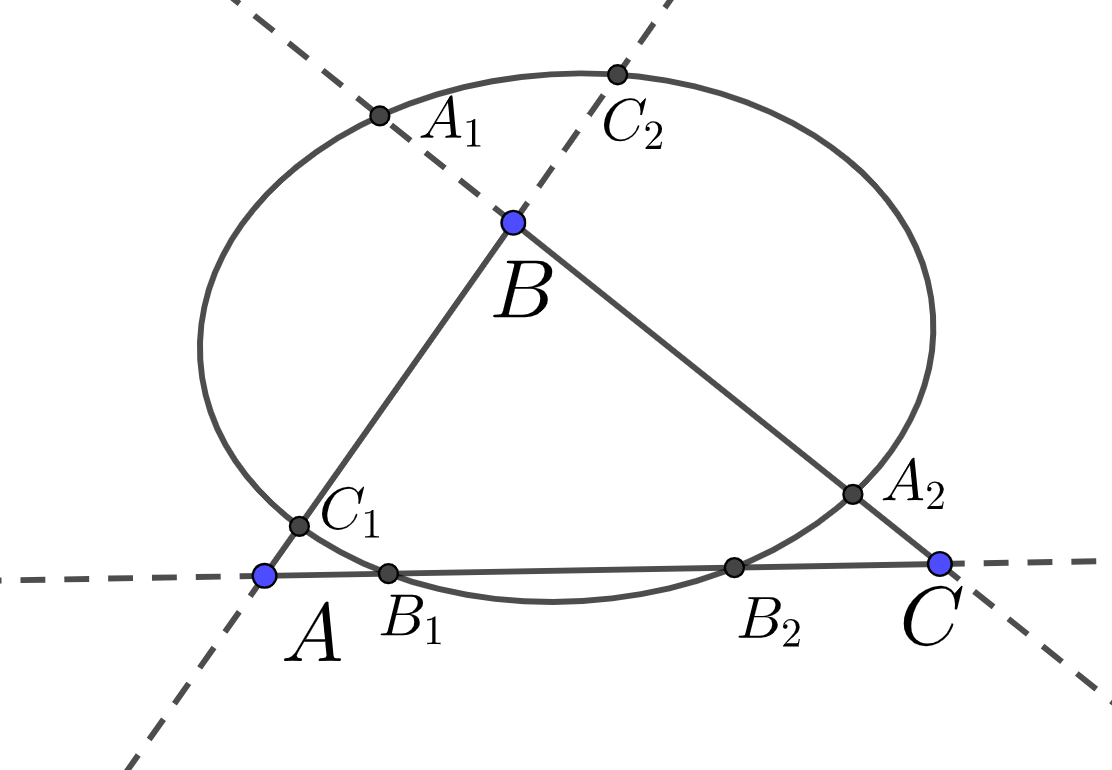}
        \caption{$A_1, A_2, B_1, B_2, C_1$, and $C_2$ lie on a conic if and only if the above equation holds.}
        \label{fig:CarnotStatement}
    \end{figure}
\end{center}
\begin{theorem}[Carnot's Theorem for Conic]
Let $\triangle ABC$ be an arbitrary triangle in $\E^2$. Let $A_1,A_2$, $B_1,B_2$, and $C_1,C_2$ be points on the sides $\line{BC}$, $\line{AC}$, and $\line{AB}$ respectively. Then, $A_1$, $A_2$, $B_1$, $B_2$, $C_1$, and $C_2$ lie on a conic if and only if 
\begin{equation} \label{EuclidCarnot}
\frac{{AC_1}}{{C_1B}}\cdot\frac{{AC_2}}{{C_2B}}\cdot\frac{{BA_1}}{{A_1C}}\cdot\frac{{BA_2}}{{A_2C}}\cdot\frac{{CB_1}}{{B_1A}}\cdot\frac{{CB_2}}{{B_2A}} = 1
\end{equation}
where we measure the signed length of each segment.
\end{theorem}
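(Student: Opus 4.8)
The plan is to prove both directions by restricting the defining equation of the conic to each side of the triangle and reading off the product of signed ratios via Vieta's formulas. First I would fix affine coordinates and write the conic as the zero set of a quadratic $Q(x,y) = ax^2 + bxy + cy^2 + dx + ey + f$. Parametrizing the line $\line{AB}$ by $P(t) = A + t(B-A)$, so that $t=0$ gives $A$ and $t=1$ gives $B$, the restriction $Q(P(t))$ is a quadratic $\alpha t^2 + \beta t + \gamma$ whose two roots $t_1, t_2$ are exactly the parameters of $C_1$ and $C_2$. Since $A = P(0)$ and $B = P(1)$, we have $\gamma = Q(A)$ and $\alpha + \beta + \gamma = Q(B)$.

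The key observation is that the signed ratio is $\frac{AC_1}{C_1B} = \frac{t_1}{1-t_1}$ (because $\overrightarrow{AC_1} = t_1(B-A)$ and $\overrightarrow{C_1B} = (1-t_1)(B-A)$), and likewise for $C_2$. Multiplying and applying Vieta's formulas $t_1 t_2 = \gamma/\alpha$ and $t_1 + t_2 = -\beta/\alpha$ gives
\[
\frac{AC_1}{C_1B}\cdot\frac{AC_2}{C_2B} = \frac{t_1 t_2}{(1-t_1)(1-t_2)} = \frac{\gamma}{\alpha + \beta + \gamma} = \frac{Q(A)}{Q(B)}.
\]
Performing the identical computation on $\line{BC}$ and $\line{CA}$ yields $Q(B)/Q(C)$ and $Q(C)/Q(A)$; multiplying all three, the factors telescope to $1$, which is the forward direction of \eqref{EuclidCarnot}.

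For the converse, I would invoke the fact that five points in general position determine a unique conic. Assuming the hypothesized product equals $1$, take the conic $\mathcal{K}$ through $A_1, A_2, B_1, B_2, C_1$; it meets $\line{AB}$ at $C_1$ and a unique second point $C_2'$. Applying the forward direction to $\mathcal{K}$ gives a product equal to $1$ with $C_2'$ in place of $C_2$, and comparing with the hypothesis forces $\frac{AC_2'}{C_2'B} = \frac{AC_2}{C_2B}$. Since $t \mapsto t/(1-t)$ is injective, $C_2' = C_2$, so all six points lie on $\mathcal{K}$.

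I expect the main obstacle to be the converse, specifically the bookkeeping of degenerate configurations: ensuring the five chosen points are in general position so that a unique conic exists, that $\line{AB}$ is not tangent to $\mathcal{K}$ (so that the second intersection $C_2'$ is a genuine point), and that no relevant value $Q(A), Q(B), Q(C)$ vanishes (which would place a vertex on the conic and make a ratio undefined). Keeping the signed-length conventions consistent across all three sides is the other point requiring care, though it is routine once the orientation of each parametrization is fixed.
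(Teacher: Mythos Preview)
Your Vieta-and-telescoping argument is correct and is the classical route to the Euclidean Carnot theorem. The paper, however, does not prove this statement at all: Theorem~1 is stated in the introduction as background, and when the Euclidean case is needed inside the proof of the non-Euclidean generalization (Theorem~\ref{GeneralCarnotThm}) the paper simply invokes \cite{CarnotApplications}. So there is no in-paper proof to compare yours against; you have supplied precisely what the paper outsources. The paper's own machinery---rewriting the Carnot product as a product of cross-ratios via Menelaus (Lemma~\ref{lemma:carnotrewrite}) and then using invariance of the cross-ratio under central projection (Theorem~\ref{CrossLineCrossPoints})---is aimed at transporting the result from $\E^2$ to $\S^2$ and $\H^2$, not at establishing the Euclidean base case itself. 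The degeneracy issues you flag (a vertex on the conic forcing some $Q(\,\cdot\,)=0$, or the five chosen points failing general position) are exactly the right caveats, and your outline for handling them is standard and adequate.
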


\section{Non-Euclidean Geometry and other preliminaries}
To work in non-Euclidean geometry, we will choose appropriate models for spherical ($\S^2$) and hyperbolic ($\H^2$) geometry and introduce the key terminology.

{\it Spherical Geometry} ($\S^2$) We will model spherical geometry on the unit sphere $x^2+y^2+z^2=1$. \emph{Geodesics} (sometimes called \emph{great circles}) are the intersection of the sphere with a plane passing through the origin (see figure \ref{fig:SphereLine}). For almost every pair of points, there is a unique geodesic passing through this pair of points; however, any two points formed by the intersection of the sphere with a line passing through the origin are \emph{antipodal}, and there are an infinite number of geodesics passing through such a pair of points.

 \begin{figure} \begin{center} \includegraphics[scale=0.4]{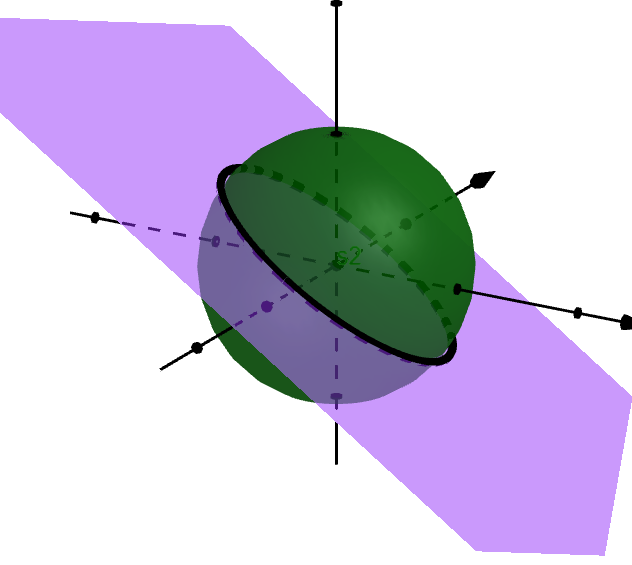} \caption{A geodesic in $\S^2$} \label{fig:SphereLine} \end{center} \end{figure} 

There are two line segments connecting any two non-antipodal points, whose total length sums to $\pi$. We say that for any three non-collinear points $A$, $B$, and $C$, no two of which are antipodes, the triangle $\triangle ABC$ is composed of the shorter segments between each pair of points. We include this restriction to preserve basic facts about the triangle, such as the triangle inequality. We measure angle between two geodesics as the angle in $\R^3$ between their tangent vectors in the tangent plane. We meausre distance using the induced metric of $\R^3$; i.e. $ds^2=dx^2+dz^2+dz^2.$ For more details, see \cite{SphericalTextbook}

{\it Hyperbolic Geometry} ($\H^2$): We will choose the \emph{hyperboloid model}, set on the upper sheet of the two sheet hyperboloid $x^2+y^2-z^2=-1$, $z>0$, in Minkowski 3-space $\mathbb{M}^3$. Geodesics are the intersection of the hyperboloid with a plane passing through the origin. We measure distance and angles using the \emph{Minkowski metric}: $ds^2=dx^2+dy^2-dz^2$. For more details, see \cite{Reynolds}

\begin{figure} \begin{center} \includegraphics[scale=0.2]{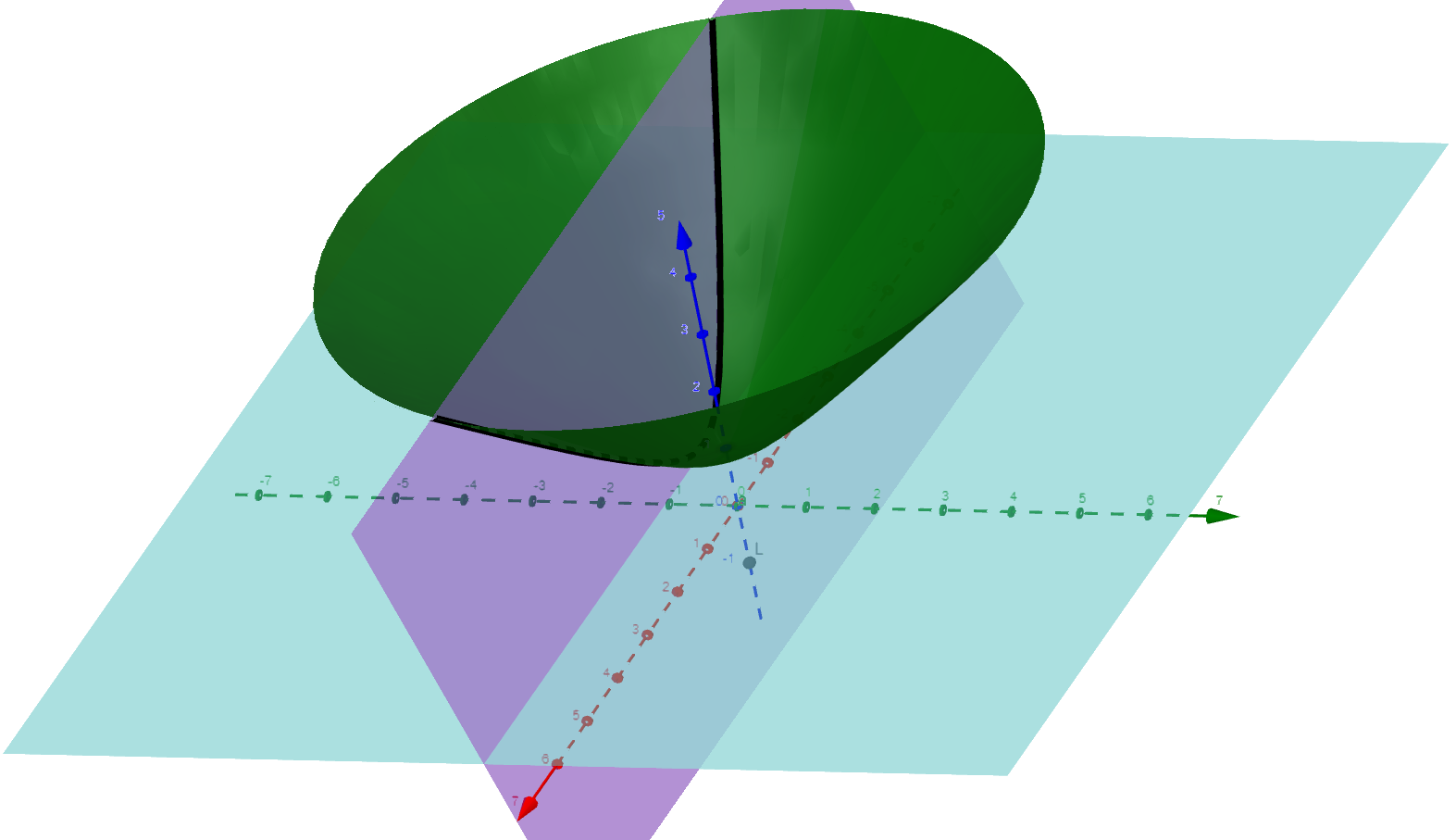} \caption{A geodesic in $\H^2$} \end{center} \end{figure}

With these definitions in mind, one can prove that the law of sines generalizes into $\S^2$ and $\H^2$. For any triangle $\triangle ABC$ in $\S^2$ or $\H^2$ respectively,
\[\frac{\sin A}{\sin a} = \frac{\sin B}{\sin b} = \frac{\sin C}{\sin c} \text{ in }\S^2 \qquad \text{ or } \qquad \frac{\sin A}{\sinh a} = \frac{\sin B}{\sinh b} = \frac{\sin C}{\sinh c}\text{ in }\H^2. \]
We can compress these formulas into one, using the \emph{generalized sine function}: 
\[\gsin(x)=x-\frac{Kx^3}{3!}+\frac{K^2x^5}{5!}-\frac{K^3x^7}{7!}+\ldots \]
where $K$ is a parameter we will regard as the Gaussian curvature $K$ of the constant-curvature surface. When $K=1$ (corresponding to $\S^2$), $\gsin(x)=\sin(x)$, when $K=0$ (corresponding to $\E^2$), $\gsin(x)=x$, and when $K=-1$ (corresponding to $\H^2$), $\gsin(x)=-1$, so we can write the law of sines in all geometries as 
\begin{equation}\label{LawOfSines}
\frac{\sin A}{\gsin a}=\frac{\sin B}{\gsin B}=\frac{\sin C}{\gsin C}.
\end{equation}
We will use this notation extensively.

We can also extend a notion of a conic:
\begin{definition} A \emph{conic} in $\S^2$ ($\H^2$) is the intersection of a quadratic cone (e.g. a cone whose boundary curve is any conic) centered at the origin with the sphere (hyperboloid).
\end{definition}
\begin{figure} \begin{center} \includegraphics[scale=0.4]{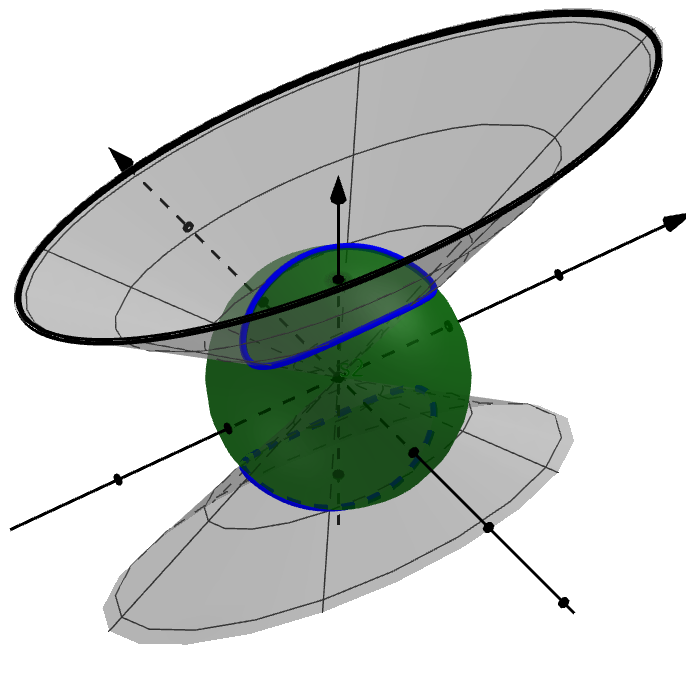} \caption{An ellipse in $\S^2$} \label{fig:SphereEllipse} \end{center} \end{figure} 

We see a picture of a spherical ellipse in Figure \ref{fig:SphereEllipse}. The quadratic cone in question is a cone whose base is an ellipse, as we see. Here, we assume that if a point is on a spherical conic, so is its antipode. While this definition initially seems to be distinct from the Euclidean definitions using two foci, \cite{ConicProjection} proves that the definition is equivalent (up to antipodes). Thus, the spherical ellipse pictured in Figure \ref{fig:SphereEllipse} has two foci where the sum of the distances to these foci is constant. This definition is also equivalent to a version of the focus-directrix definition  (\cite{SphericalandHyperbolicConics}). \\ 

{\it Central Projection}
One crucial tool we will use is projection from one geometry to another. While stereographic projection between the sphere and the plane is well studied (see \cite{IntrinsicCrossRatio} for ideas similar to this paper, but using stereographic projection), we will use \emph{central} (elsewhere referred to as \emph{gnomic}) projection repeatedly.

Let $w$ be a plane in $\R^3$ not containing the origin. Then, let $\pi:\S^2(\H^2)\to w$ be the central projection from the origin from the unit sphere or (the upperupper sheet of the hyperboloid) onto the plane $w$. That is, for each point $P$ in some subset of the sphere the sphere (hyperboloid), draw a line from the origin through $P$ and extend it until it intersects $w$ at some point $P'$ if $P'$ exists. Then $\pi(P)=P'$.  (See Figure \ref{fig:SphereEuclidProjection} for an example).

There are several important details to note. Some points on the sphere (and depending on our plane $w$, the hyperboloid) will not intersect the plane $w$ when projected. One can interpret these points as being sent to a line at infinity (so as to make $w$ a projective plane), but we will avoid this detail by restricting the domain of $\pi$ to points where the line intersects $w$.

More important is the relationship of projection to antipodal points on the sphere. Central projection sends any pair of antipodal points to the same image (by definition, they lie on a line passing through the origin). Thus, we will usually consider central projection from an open hemisphere of $\S^2$ to $w$. Should any point in question not lie in that open hemisphere, we can substitute its antipode and get the same projected image. We will use this trick several times later on.

When we project $\H^2$, we will generally set $w$ to be the plane $z=1$. From this, we obtain the Beltrami-Klein model, which has tight connections to projective geometry \cite{Reynolds}. If easier, one can think of many of the following proofs as being set in the Beltrami-Klein model.

Finally, central projection has several nice properties that we will use later on. As it is smooth and one-to-one (when restricted to an open hemisphere of $\S^2$, at least), it preserves incidence and tangency. Even nicer, it sends geodesic segments to geodesic segments. This follows from our geometric definition of a geodesic as the central projection of a plane through the origin is a line. 
Similarly, this projection sends a conic section to a conic section \cite{SphericalandHyperbolicConics}. This follows from the definition as well: the central projection of a quadratic cone is a quadratic curve: i.e., a conic. This is akin to how hyperbolic conics appear as Euclidean conics in the Beltrami-Klein model.

\section{Non-Euclidean Cross-Ratios}

As noted at the end of the previous section, central projection from the sphere (hyperboloid) to a plane preserves incidence, geodesics, and conics. This is also true of central projection between two planes. However, central projection between two planes (which corresponds to a projective transformation of the plane) preserves another important property: the \emph{cross-ratio}.

\begin{definition}
    Let $A$, $B$, $C$, and $D$ be four points on an oriented line $\ell$ in $\E^2$. (See Figure 5). Then, define the $\emph{cross-ratio}$ of these four points as
    \[(ABCD)_{\E^2}=\frac{{AC}}{{AD}}\cdot\frac{{BD}}{{BC}},\]
    where we measure the signed Euclidean distance of each segment with respect to the orientation of $\ell$.
\end{definition}

\begin{figure} \begin{center} \includegraphics[width=\textwidth]{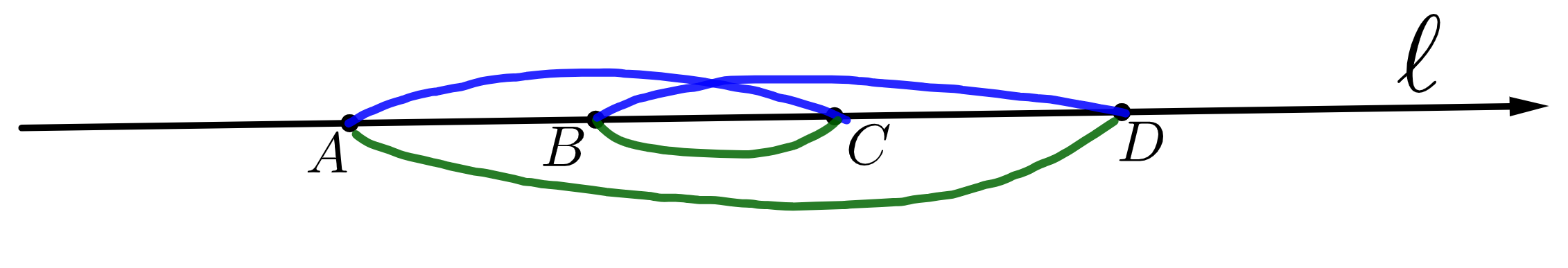} \caption{The Cross-Ratio on the geodesic $\ell$} \end{center} \end{figure} 

\begin{remark}
    If $A$, $B$, $C$, and $D$ lie in some different order along the line, then the orientation of the line becomes important as $AC=-CA$, for instance. However, if we flip the orientation of the line while keeping the points the same, the cross-ratio will be preserved. Thus, we will tend to ignore the specific orientations of the lines in question, except when it arises as a technical detail.
\end{remark}

We briefly note here an essential property of cross-ratios, which we will see later. This theorem follows from Lemma \ref{lemma:fundamentaltheorem} later on because $A$, $B$, $C$, $D$, $A',B',C',D'$ and $O$ all lie in a plane in $\R^3$.

\begin{theorem}\label{thm:EuclideanInvariance}
    If $A'$, $B'$, $C'$, $D'$ are the images of $A$, $B$, $C$, and $D$ under central projection from a point $O$ between two planes in $\R^3$, $(ABCD)=(A'B'C'D')$. 
\end{theorem}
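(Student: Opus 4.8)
The plan is to reduce the three-dimensional statement to a planar one and then express the cross-ratio purely in terms of the angles of the pencil of lines through $O$. First I would observe that the four points $A,B,C,D$ lie on a common line $\ell$, and together with the center $O$ (which we may assume is not on $\ell$) they span a single plane $\Pi\subset\R^3$. Every projection line $OA$, $OB$, $OC$, $OD$ lies in $\Pi$, so the images $A',B',C',D'$ — being the intersections of these lines with the second plane $w'$ — all lie on the line $\ell'=\Pi\cap w'$. Hence the entire configuration is coplanar, exactly the observation recorded before the statement, and it suffices to prove invariance of the cross-ratio under a perspectivity within the single plane $\Pi$.

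Working in $\Pi$, the key step is to rewrite each signed length in terms of data at $O$. Letting $h$ denote the distance from $O$ to $\ell$, the (signed) area of triangle $OAC$ can be computed two ways,
\[ \tfrac12\, AC\cdot h = \tfrac12\, OA\cdot OC\,\sin\angle AOC, \]
so that $AC = \dfrac{OA\cdot OC\,\sin\angle AOC}{h}$, and likewise for $AD$, $BC$, and $BD$. Substituting these four expressions into the definition $(ABCD)=\frac{AC}{AD}\cdot\frac{BD}{BC}$, the factors $OA$, $OB$, $OC$, $OD$ and $h$ all cancel, leaving
\[ (ABCD) = \frac{\sin\angle AOC\cdot\sin\angle BOD}{\sin\angle AOD\cdot\sin\angle BOC}. \]
This expression depends only on the angles between the four lines $OA$, $OB$, $OC$, $OD$, i.e. only on the pencil through $O$, and not on the choice of transversal $\ell$.

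To finish, I would apply the identical computation to the transversal $\ell'$. Since $A'$ lies on line $OA$, $B'$ on $OB$, and so on, the four lines of the pencil are unchanged, so $\angle A'OC'=\angle AOC$, $\angle B'OD'=\angle BOD$, and similarly for the remaining angles. The sine expression for $(A'B'C'D')$ is therefore identical to that for $(ABCD)$, which gives $(ABCD)=(A'B'C'D')$.

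I expect the main obstacle to be the sign bookkeeping. The cross-ratio uses \emph{signed} lengths along oriented lines, so the area identity must be read with directed angles and directed segments in order for the cancellation to respect orientation rather than merely absolute value. Making the claim ``twice the signed area equals $OA\cdot OC\,\sin\angle AOC$'' precise — fixing a consistent orientation of $\Pi$ and of the angles $\angle XOY$ — is the delicate part; once the signs are handled consistently, the algebraic cancellation is routine. I would also dispose of the degenerate cases, namely $O$ lying on $\ell$, or a line of the pencil running parallel to $w'$ so that an image point escapes to infinity, by appealing to the restriction on the domain of $\pi$ already made in Section 1.
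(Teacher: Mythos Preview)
Your proposal is correct and matches the paper's approach: the paper defers the proof to Lemma~\ref{lemma:fundamentaltheorem}, noting exactly your observation that $A,B,C,D,A',B',C',D',O$ all lie in a single plane, and that lemma establishes precisely the sine-of-angles formula $(ABCD)=\dfrac{\sin\angle AOC\,\sin\angle BOD}{\sin\angle AOD\,\sin\angle BOC}$ that you derive. The only cosmetic difference is that the paper reaches this formula via the Law of Sines rather than your double-area identity $AC\cdot h = OA\cdot OC\,\sin\angle AOC$, but the content and the cancellation are identical.
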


We can extend this definition into non-Euclidean geometry:

\begin{definition}
    Let $A$, $B$, $C$, $D$ be four distinct pairwise non-antipodal points on an oriented geodesic $\ell$ in $\S^2$, $\E^2$, or $\H^2$. We say the (projective) cross-ratio of the four points is 
    \[(ABCD)=\frac{\gsin{AC}}{\gsin{AD}}\cdot\frac{\gsin{BD}}{\gsin{BC}}\]
    where $\gsin(AC)$ is the generalized sine of the oriented distance between $A$ and $C$ along the geodesic $\ell$.
\end{definition}

This extension of the cross-ratio into spherical geometry is ancient. It was first introduced by Menelaus in his second century work {\it Spherics} \cite{Spherics} and was further extended by several medieval Arab mathematicians \cite{AncientCrossRatio}. An equivalent definition was independently derived in an engineering context in \cite{EngineerCrossRatio}. 
A distinct extension of the cross-ratio into non-Euclidean geometry is considered in \cite{IntrinsicCrossRatio}, but it has tighter connections to stereographic projection and M\"{o}bius transformations, rather than projective transformations. Despite these preceding works, the main result of this section, Theorem \ref{CrossLineCrossPoints} is new, as well as the hyperbolic extension of these ideas.

\begin{figure} \centering \includegraphics[scale=0.5]{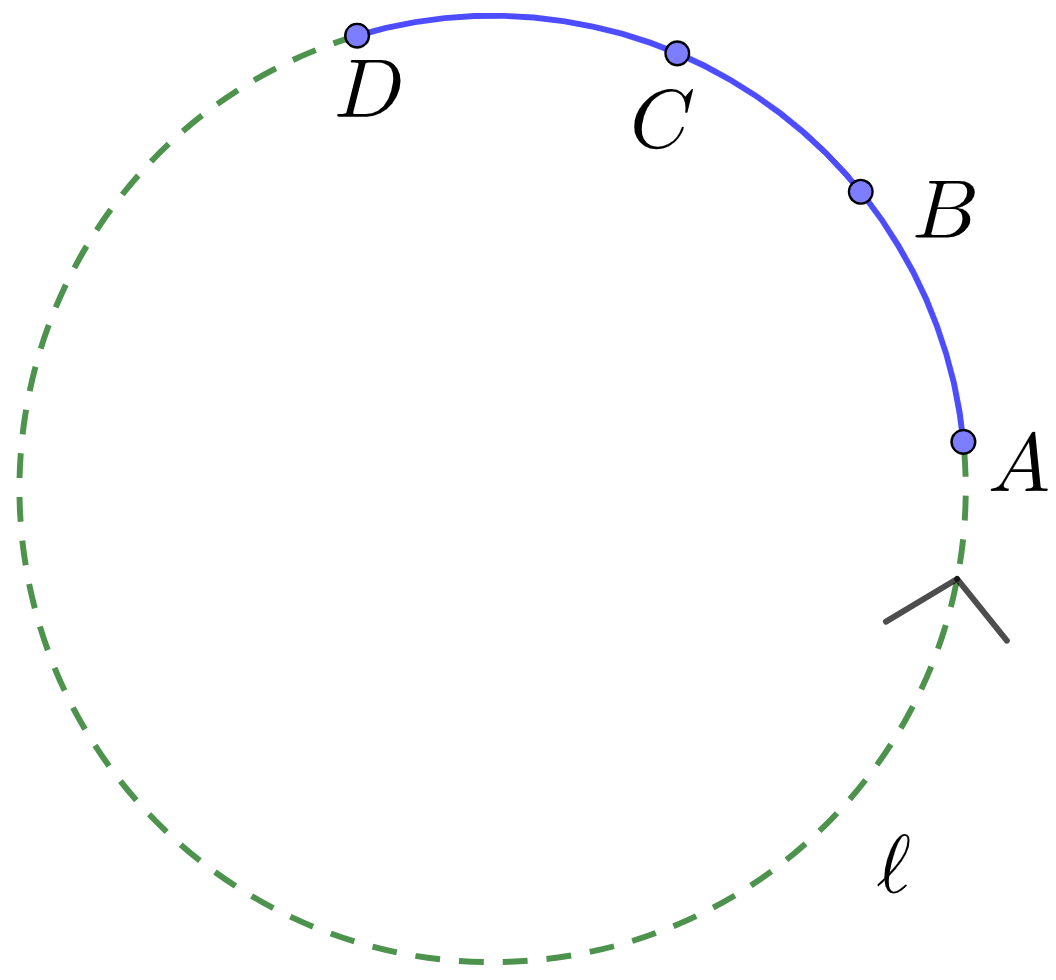} \caption{The Cross-Ratio along a great circle} \label{crossratioorientation} \end{figure} 

Before we use this definition, we must verify it is well defined in spherical geometry. As noted earlier, for any two non-antipodal points $A$ and $D$ on the sphere, there are two line segments between $A$ and $D$: one of length $d<\pi$, and the other of length $2\pi-d$. So we might worry that measuring the distance in the other direction would change the ratio 
(e.g. if in Figure \ref{crossratioorientation}, we measured $AD$ by the dashed segment, rather than the solid segment). If we traverse in one direction, we have the generalized sine of the length is $\sin(d)$. In the other direction, we include a negative because we flipped the orientation of travel, and we get $-\sin(2\pi-d)=\sin(d)$, so this ratio is well-defined.

It is also preserved by antipodes:

\begin{lemma}\label{AntipodeIndependence}
Let $A$, $B$, $C$, and $D$ be four distinct, pairwise non-antipodal points on an oriented line $\ell$ in $\S^2$. Then, if $A^\ast$ is the antipode of $A$,
\[(ABCD)=(A^\ast BCD).\]
\end{lemma}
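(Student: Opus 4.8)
The plan is to exploit the single fact that passing to the antipode of $A$ shifts every oriented distance measured from $A$ along $\ell$ by exactly $\pi$, and then to observe that $A$ enters the cross-ratio formula symmetrically between the numerator and the denominator, so that the two resulting sign changes cancel.

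First I would note that $A^\ast$ actually lies on the same geodesic $\ell$: since $\ell$ is the intersection of $\S^2$ with a plane through the origin, it is invariant under the antipodal map, so both $A$ and $A^\ast$ sit on $\ell$ and are separated by arc length $\pi$. Parametrizing $\ell$ by arc length and writing $a$, $c$, $d$ for the positions of $A$, $C$, $D$, the point $A^\ast$ sits at position $a+\pi$. With the orientation of $\ell$ fixed, the oriented distance from $A^\ast$ to $C$ is then $(c-a)-\pi$, i.e. $A^\ast C = AC - \pi$, and likewise $A^\ast D = AD - \pi$; note that $B$, $C$, and $D$ are untouched.

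Next I would apply the generalized sine (here $\gsin = \sin$, since $K=1$) together with the identity $\sin(\theta-\pi)=-\sin\theta$ to conclude $\gsin(A^\ast C) = -\gsin(AC)$ and $\gsin(A^\ast D) = -\gsin(AD)$. Substituting into the defining formula gives
\[(A^\ast BCD) = \frac{\gsin(A^\ast C)}{\gsin(A^\ast D)}\cdot\frac{\gsin(BD)}{\gsin(BC)} = \frac{-\gsin(AC)}{-\gsin(AD)}\cdot\frac{\gsin(BD)}{\gsin(BC)} = (ABCD),\]
since $A$ appears only in the factors $\gsin(AC)$ and $\gsin(AD)$ and the two factors of $-1$ cancel.

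I expect the main obstacle to be purely the bookkeeping around orientation and the two-segment ambiguity on the sphere, rather than any substantive difficulty. I would lean on the well-definedness argument given just before the lemma — that $\gsin$ of an oriented spherical distance is independent of which of the two arcs one measures, because $-\sin(2\pi-d)=\sin(d)$ — to guarantee that the relation $A^\ast C = AC - \pi$ yields the correct sign regardless of how the individual arcs happen to be chosen. Once that is pinned down the cancellation is immediate, and the identical computation would establish the analogous statements for the antipodes of $B$, $C$, or $D$ by symmetry of the formula.
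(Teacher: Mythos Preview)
Your proposal is correct and follows essentially the same approach as the paper: both arguments observe that replacing $A$ by $A^\ast$ shifts the oriented distances $AC$ and $AD$ by $\pi$, apply the identity $\sin(\theta\pm\pi)=-\sin\theta$, and note that the two resulting factors of $-1$ cancel in the cross-ratio. Your arc-length parametrization makes the shift $A^\ast C = AC - \pi$ slightly more explicit than the paper's $A^\ast C = \pi \pm d$, but the substance is identical.
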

\begin{proof}
One can verify that if the distance $AC=d$, then $A^\ast C=\pi\pm d$. Then,
\begin{align*}
    (A^\ast BCD)&=\frac{\sin{A^\ast C}}{\sin{A^\ast D}}\cdot\frac{\sin{BD}}{\sin{BC}} 
    = \frac{\sin{(\pi \pm AC)}}{\sin{(\pi\pm AD)}}\cdot\frac{\sin{BD}}{\sin{BC}} \\
    &= \frac{-\sin{AC}}{-\sin{AD}}\cdot\frac{\sin{BD}}{\sin{BC}} = (ABCD)
\end{align*}
as desired.
\end{proof}

In order to make statements about the cross-ratio, it is helpful to define the cross-ratio of four lines:
\begin{definition}
    Let $\ell_1$, $\ell_2$, $\ell_3$, and $\ell_4$ be four concurrent lines in $\S^2$, $\E^2$, or $\H^2$. Let $\angle\ell_i\ell_j$ be the oriented angle between lines $\ell_i$ and $\ell_j$. Then, the \emph{cross-ratio} of the four lines is
    \[(\ell_1\ell_2\ell_3\ell_4)=\frac{\sin(\angle\ell_1\ell_3)}{\sin(\angle\ell_1\ell_4)}\cdot\frac{\sin(\angle\ell_2\ell_4)}{\sin(\angle\ell_2\ell_3)}.\]
\end{definition}

We now use this definition to prove that a crucial property of the cross-ratio holds in non-Euclidean geometry as well:
\begin{lemma} \label{lemma:fundamentaltheorem}
    Suppose $A$, $B$, $C$, and $D$ are four distinct, pairwise non-antipodal points on a line, and $P$ is a point not on that line in $\S^2$, $\E^2$, or $\H^2$. Then, if we let $\ell_1=AP$, $\ell_2=BP$, $\ell_3=CP$, and $\ell_4=DP$,
    \[(ABCD)=(\ell_1\ell_2\ell_3\ell_4)\]
\end{lemma}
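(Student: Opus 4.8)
The plan is to reduce everything to the law of sines~(\ref{LawOfSines}) by applying it to the four triangles formed by $P$ together with pairs of the collinear points. First I would consider $\triangle APC$: the angle at the vertex $P$ is exactly $\angle\ell_1\ell_3$ and the side opposite it is $AC$, so the law of sines gives $\gsin(AC) = \gsin(PC)\,\sin(\angle APC)/\sin(\angle PAC)$. Carrying out the identical computation in $\triangle APD$, $\triangle BPC$, and $\triangle BPD$ produces analogous expressions for $\gsin(AD)$, $\gsin(BC)$, and $\gsin(BD)$, each factoring into a side through $P$ (either $\gsin(PC)$ or $\gsin(PD)$), the sine of an angle at $P$, and the sine of an angle at the base vertex $A$ or $B$.

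The key simplification comes from collinearity. Since $A$, $C$, and $D$ all lie on $\ell$, the angles $\angle PAC$ and $\angle PAD$ are both measured between the ray $AP$ and the common line $\ell$; they are therefore either equal or supplementary, so $\sin(\angle PAC) = \sin(\angle PAD)$, and likewise $\sin(\angle PBC)=\sin(\angle PBD)$ at $B$. Forming the product $(ABCD) = \frac{\gsin(AC)}{\gsin(AD)}\cdot\frac{\gsin(BD)}{\gsin(BC)}$ and substituting the four expressions, the base-vertex sines cancel within each ratio and the through-$P$ sides $\gsin(PC)$ and $\gsin(PD)$ cancel across the product, leaving exactly $\frac{\sin(\angle\ell_1\ell_3)\,\sin(\angle\ell_2\ell_4)}{\sin(\angle\ell_1\ell_4)\,\sin(\angle\ell_2\ell_3)} = (\ell_1\ell_2\ell_3\ell_4)$, as desired. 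Because every step uses only the unified $\gsin$ form of the law of sines, the argument runs identically in $\S^2$, $\E^2$, and $\H^2$.

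The hard part will be the sign and orientation bookkeeping. The law of sines is stated for unsigned lengths and angles, whereas both cross-ratios are signed — one with respect to the orientation of $\ell$, the other with respect to oriented angles at $P$. So I would fix orientations on $\ell$ and on the pencil at $P$ at the outset and verify that they are compatible: when a segment such as $AC$ reverses sign relative to $\ell$, the corresponding oriented angle $\angle\ell_1\ell_3$ must flip in a matching way. Concretely, I expect the ``equal or supplementary'' dichotomy behind $\sin(\angle PAC)=\sin(\angle PAD)$ to be exactly what reconciles the unsigned magnitudes computed above with the signed cross-ratio, so the main work is confirming that the residual signs line up rather than producing an unwanted factor of $-1$. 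The spherical well-definedness already established (invariance under measuring along the complementary arc, and Lemma~\ref{AntipodeIndependence}) should make this sign analysis robust.
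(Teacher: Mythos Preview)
Your proposal is correct and follows essentially the same route as the paper: apply the law of sines in the four triangles $\triangle APC$, $\triangle APD$, $\triangle BPC$, $\triangle BPD$, then cancel the $\gsin(PC)$, $\gsin(PD)$ factors and use $\sin(\angle PAC)=\sin(\angle PAD)$, $\sin(\angle PBC)=\sin(\angle PBD)$. The paper handles your anticipated sign bookkeeping by fixing at the outset a WLOG ordering of $A,B,C,D$ along $\ell$ and orientations on the pencil at $P$, then computing the quotient $(ABCD)/(\ell_1\ell_2\ell_3\ell_4)$ and showing it equals $1$ --- a cosmetic rearrangement of what you describe.
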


\begin{figure} \begin{center} \includegraphics[width=\textwidth]{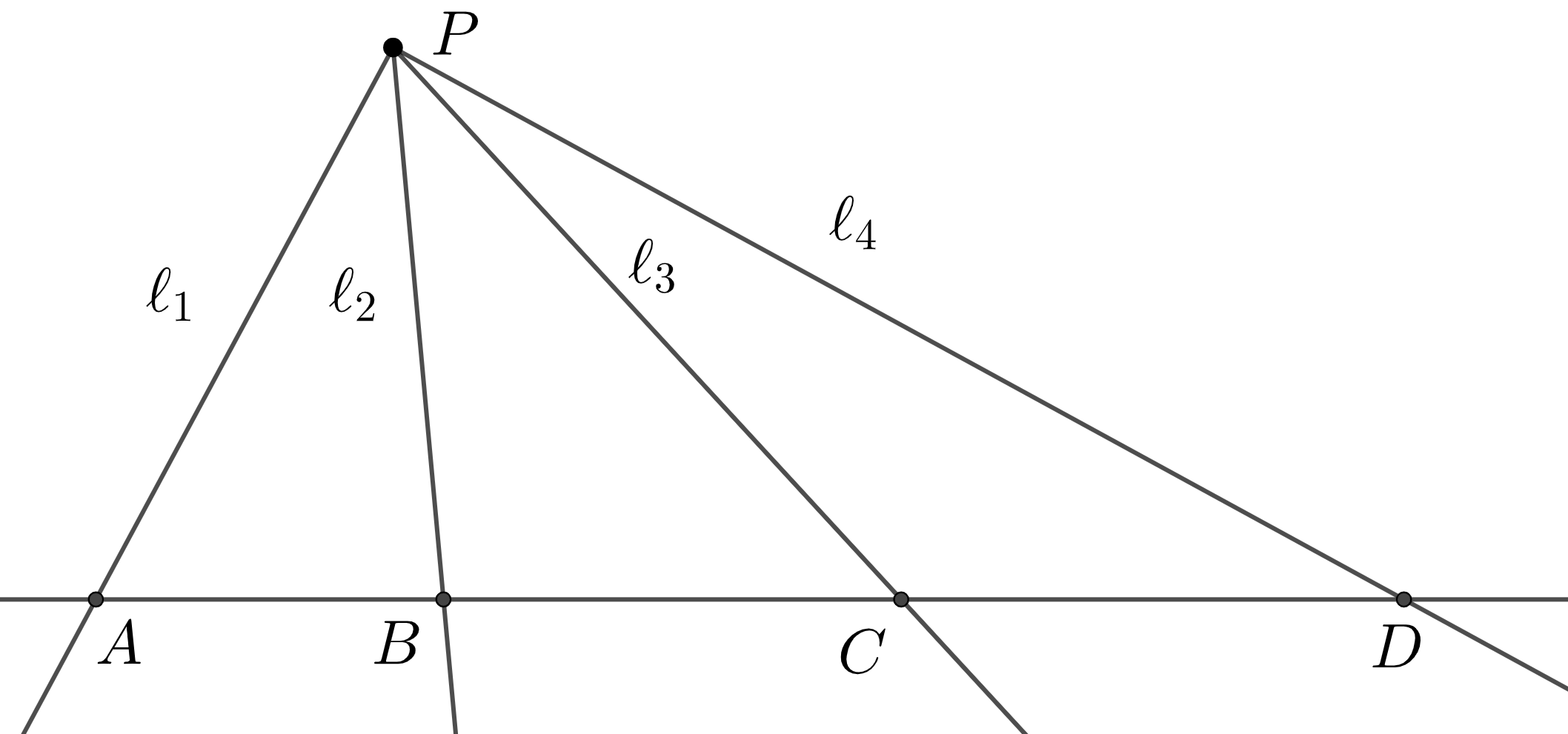} \caption{The cross-ratio $(\ell_1\ell_2\ell_3\ell_4)$ is equal to the cross-ratio $(ABCD)$.}
    \label{fig:fundamentallemma} \end{center} \end{figure} 

A proof by Ibn 'Ir\={a}q from the eleventh century is given in \cite{AncientCrossRatio} for the spherical case, but we provide in $\E^2$, $\S^2$, and $\H^2$ simultaneously for completeness.

\begin{proof}
    Assume without loss of generality that $A$, $B$, $C$, and $D$ lie in that order on a line (see figure \ref{fig:fundamentallemma}) which is oriented so that $AD$ is positive. Further, assume that the lines $AP$, $BP$, $CP$, and $DP$ are oriented so that $AP$, $BP$, $CP$, and $DP$ are all of positive signed length and all angles are oriented counterclockwise. If they lie in a different order or have different orientations, we can make an identical argument to the one that follows. 

    Since our points are distinct and non-antipodal, $(ABCD)\neq 0$. Then, by definition,
    \begin{align*}
        \frac{(ABCD)}{(\ell_1\ell_2\ell_3\ell_4)}&=\frac{\frac{\gsin{AC}}{\gsin{AD}}\cdot\frac{\gsin{BD}}{\gsin{BC}}}{\frac{\sin(\angle\ell_1\ell_3)}{\sin(\angle\ell_1\ell_4)}\cdot\frac{\sin(\angle\ell_2\ell_4)}{\sin(\angle\ell_2\ell_3)}} \\
        &= \frac{\gsin(AC)}{\sin(\angle\ell_1\ell_3)}\cdot \frac{\gsin(BD)}{\sin(\angle\ell_2\ell_4)}\cdot \frac{\sin(\angle\ell_1\ell_4)}{\gsin(AD)} \cdot \frac{\sin(\angle\ell_2\ell_3)}{\gsin(BC)}.
    \end{align*}

    However, as $\ell_1\ell_3=\angle APC$, we can use Equation \eqref{LawOfSines}, the Law of Sines, on the triangle $\triangle APC$ to conclude that
    \[\frac{\gsin(AC)}{\sin(\angle\ell_1\ell_3)} = \frac{\gsin(AC)}{\sin(\angle APC)} = \frac{\gsin(PC)}{\sin(\angle PAC)},\]
    paying close attention to the signs of each length and angle. 
    
    Similarly, as $\ell_2\ell_4=\angle BPD$, we can use the law of sines on $\triangle BPD$ to see that 
    \[\frac{\gsin(BD)}{\sin(\angle\ell_2\ell_4)}=\frac{\gsin(BD)}{\sin(\angle BPD)} = \frac{\gsin(PD)}{\sin(\angle PBD)}.\]
    By identical logic on $\triangle APD$ and $\triangle BPC$, $$\frac{\sin(\angle\ell_1\ell_4)}{\gsin(AD)} = \frac{\sin(\angle PAD)}{\gsin(PD)} \text{ and } \frac{\sin(\angle\ell_2\ell_3)}{\gsin(BC)} =  \frac{\sin(\angle PBC)}{\gsin(PC)}.$$ Thus,
    \begin{align*}
        \frac{(ABCD)}{(\ell_1\ell_2\ell_3\ell_4)} &= \frac{\gsin(AC)}{\sin(\angle\ell_1\ell_3)}\cdot \frac{\gsin(BD)}{\sin(\angle\ell_2\ell_4)}\cdot \frac{\sin(\angle\ell_1\ell_4)}{\gsin(AD)} \cdot \frac{\sin(\angle\ell_2\ell_3)}{\gsin(BC)}\\
        &=\frac{\gsin(PC)}{\sin(\angle PAC)}\cdot\frac{\gsin(PD)}{\sin(\angle PBD)}\cdot\frac{\sin(\angle PAD)}{\gsin(PD)}\cdot\frac{\sin(\angle PBC)}{\gsin(PC)}\\
        &=\frac{\sin(\angle PAD)}{\sin(\angle PAC)}\cdot \frac{\sin(\angle PBC)}{\sin(\angle PBD)} = 1
    \end{align*}
    as $\angle PAD=\angle PAC$ and $\angle PBC=\angle PBD$.
\end{proof}

This result is enough to conclude that perspectivity from one geodesic to another preserves the cross-ratio (see Figure \ref{fig:perspectivity}).

\begin{corollary} Let $A$, $B$, $C$, $D$ lie on a geodesic $\ell$ in $\S^2$, $\E^2$, or $\H^2$, and let $A'$, $B'$, $C'$, $D'$ be their images under a perspectivity from $P$ onto a line $\ell'$. Then,
\[(ABCD)=(A'B'C'D').\] \end{corollary}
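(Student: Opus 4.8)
The plan is to apply Lemma \ref{lemma:fundamentaltheorem} twice, once on each line, using the single center of perspectivity $P$ as the common vertex of the pencil. By definition, a perspectivity from $P$ sends each point $X \in \ell$ to $X' = \line{PX} \cap \ell'$; in particular $A'$ lies on the line $PA$, $B'$ lies on $PB$, and so on. Consequently the four lines $\ell_1 = PA$, $\ell_2 = PB$, $\ell_3 = PC$, $\ell_4 = PD$ determined by $A, B, C, D$ and the vertex $P$ are \emph{the very same} four lines determined by $A', B', C', D'$ and $P$, since $\line{PA} = \line{PA'}$, etc. This shared pencil is the entire engine of the proof.

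First I would invoke Lemma \ref{lemma:fundamentaltheorem} with the points $A, B, C, D$ on $\ell$ and the external point $P$ to obtain $(ABCD) = (\ell_1\ell_2\ell_3\ell_4)$. Then I would invoke the same lemma a second time, now with the image points $A', B', C', D'$ on $\ell'$ and the same external point $P$, yielding $(A'B'C'D') = (\ell_1\ell_2\ell_3\ell_4)$ because these points subtend exactly the same four lines through $P$. Chaining the two equalities gives $(ABCD) = (A'B'C'D')$ at once. Beforehand I must record the applicability conditions: $P$ lies off both $\ell$ and $\ell'$, and the images are four distinct, pairwise non-antipodal points on $\ell'$, so that $(A'B'C'D')$ is defined — these are exactly the conditions under which the perspectivity is assumed to be well defined on the given quadruple.

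The one genuinely non-Euclidean point, and what I expect to be the main obstacle, is the antipodal ambiguity in $\S^2$: the line $PA$ meets $\ell'$ in a pair of antipodal points, so the image $A'$ is determined only up to antipode. I would dispatch this by appealing to Lemma \ref{AntipodeIndependence}, which guarantees that replacing any of $A', B', C', D'$ by its antipode leaves $(A'B'C'D')$ unchanged; hence the value is independent of which of the two intersection points we name $A'$. The line cross-ratio $(\ell_1\ell_2\ell_3\ell_4)$ is literally the same object in both applications of the fundamental lemma, as it is built only from the angles of the shared pencil at $P$, so no reconciliation between the two sides is needed beyond this caveat. With the antipodal subtlety handled, the result is a clean double application of Lemma \ref{lemma:fundamentaltheorem}.
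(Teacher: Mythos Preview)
Your proposal is correct and follows exactly the paper's approach: the paper's proof is the one-line observation that $\line{AP}=\line{A'P}$ (and similarly for the other points), hence $(ABCD)=(\line{AP}\,\line{BP}\,\line{CP}\,\line{DP})=(A'B'C'D')$, which is precisely your double application of Lemma~\ref{lemma:fundamentaltheorem} through the shared pencil at $P$. Your additional remarks on applicability conditions and the antipodal ambiguity (handled via Lemma~\ref{AntipodeIndependence}) are more careful than the paper, which leaves these implicit, but the core argument is identical.
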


\begin{proof}
Observe that $\line{AP}=\line{A'P}$, and similarly for the other points, so
\[(ABCD)=(\line{AP}\line{BP}\line{CP}\line{DP})=(A'B'C'D')\]
as desired. \end{proof}

\begin{figure} 
    \centering
    \includegraphics[scale=0.35]{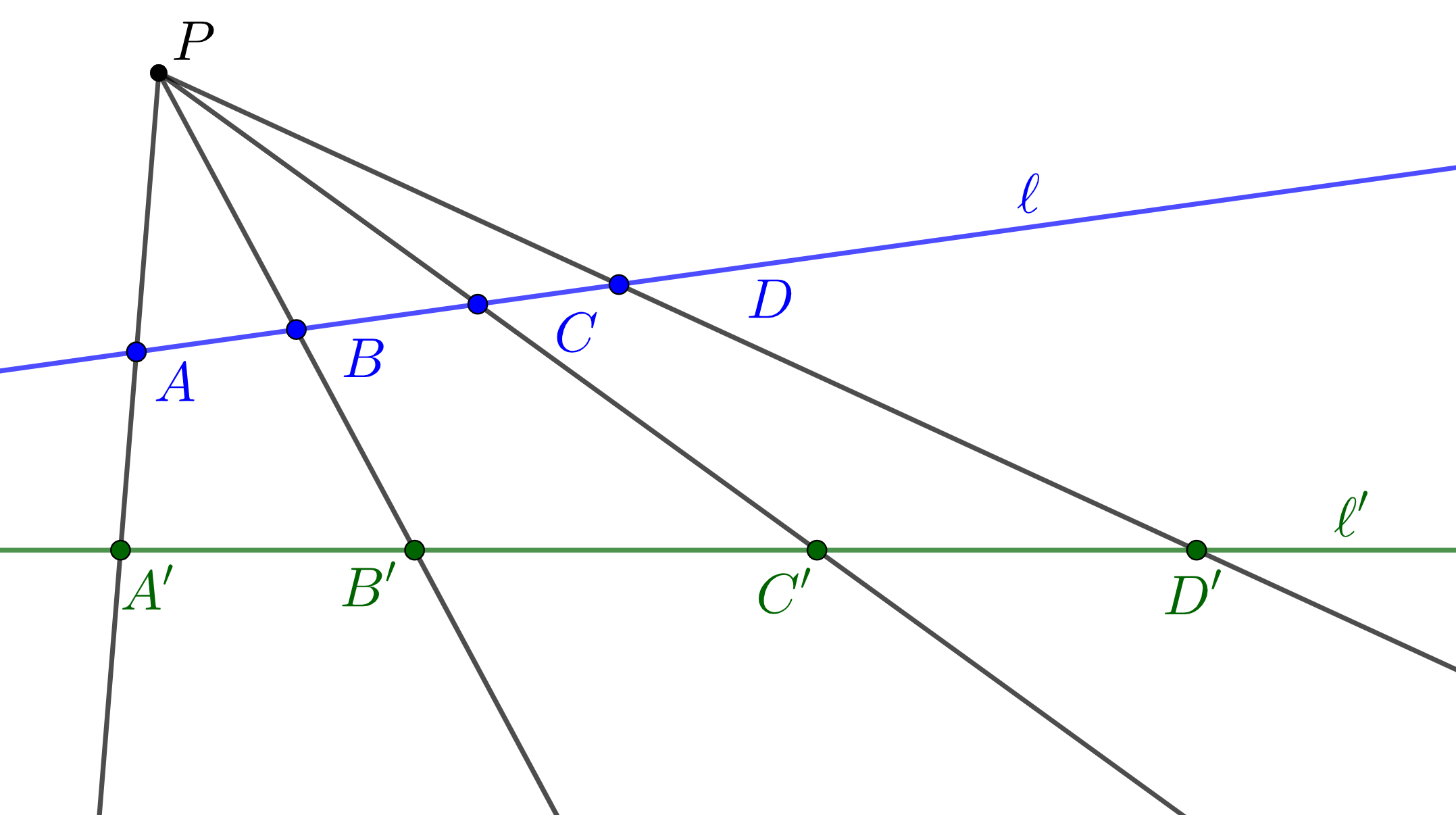}
    \caption{The cross-ratios of the two lines are equal: $(ABCD)=(A'B'C'D')$.}
    \label{fig:perspectivity}
\end{figure}

If this cross-ratio is preserved by perspectivity within each geometry, it is natural if they might also be equivalent under some projection between the different geometries. The answer turns out to be yes.

\begin{theorem} \label{CrossLineCrossPoints}
    Let $A$, $B$, $C$, and $D$ be four distinct, collinear, pairwise non-antipodal points on $\S^2$ or $\H^2$ modeled on the unit sphere or hyperboloid. Let $w$ be a plane in $\R^3$ not passing through the origin so that the images of the four points $A'$, $B'$, $C'$ and $D'$ all exist under central projection. Then,
    \[(ABCD)_{\S^2(\H^2)}=(A'B'C'D')_{\E^2}.\]
\end{theorem}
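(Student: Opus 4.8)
The plan is to carry out the entire argument inside the single plane $\Pi\subset\R^3$ through the origin whose intersection with the sphere (hyperboloid) is the geodesic $\ell$ carrying $A,B,C,D$. Since each image point lies on the line from $O$ through the corresponding point, all eight points $A,B,C,D,A',B',C',D'$ together with the center $O$ lie in $\Pi$, and central projection is nothing but the perspectivity from $O$ inside the Euclidean plane $\Pi$. The strategy is then to show that \emph{both} cross-ratios in the statement equal the cross-ratio of the single pencil of four concurrent lines $\line{OA},\line{OB},\line{OC},\line{OD}$.

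First I would dispatch the Euclidean side. The images $A',B',C',D'$ are collinear on $\ell'=\Pi\cap w$, and since $w$ misses the origin we have $O\notin\ell'$. Hence Lemma \ref{lemma:fundamentaltheorem}, applied in the Euclidean plane $\Pi$ with external point $O$, gives
\[(A'B'C'D')_{\E^2}=(\line{OA'}\,\line{OB'}\,\line{OC'}\,\line{OD'}).\]
Because $A'$ is the point where $\line{OA}$ meets $w$, the lines $\line{OA'}$ and $\line{OA}$ coincide (and likewise for the others), so the right-hand side is exactly the cross-ratio of the pencil $\line{OA},\line{OB},\line{OC},\line{OD}$.

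Next I would compute this pencil cross-ratio in terms of arc length. Parametrize $\ell$ by arc length so that on $\S^2$ we have $\gamma(t)=\cos t\,u+\sin t\,v$ for a Euclidean-orthonormal basis $\{u,v\}$ of $\Pi$, and on $\H^2$ we have $\gamma(t)=\cosh t\,u+\sinh t\,v$ for a Minkowski-orthonormal basis $\{u,v\}$; in either case set $A=\gamma(t_A)$, and so on, so that $t_C-t_A$ is the signed geodesic distance $AC$, etc. In the linear coordinates on $\Pi$ furnished by $\{u,v\}$, the line $\line{O\gamma(t)}$ has slope $\tan t$ (resp.\ $\tanh t$). Since the cross-ratio of a pencil of concurrent lines depends only on their classes in $\mathbb{P}(\Pi)\cong\mathbb{RP}^1$, it is invariant under the linear change of coordinates implicit in using this basis, and it equals the cross-ratio of the four slopes. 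Applying the identity $\tan x-\tan y=\sin(x-y)/(\cos x\cos y)$ (resp.\ $\tanh x-\tanh y=\sinh(x-y)/(\cosh x\cosh y)$), the $\cos$ (resp.\ $\cosh$) factors cancel in the fourfold ratio, leaving
\[\frac{\gsin(t_C-t_A)\,\gsin(t_D-t_B)}{\gsin(t_D-t_A)\,\gsin(t_C-t_B)}=(ABCD)_{\S^2(\H^2)}.\]
Chaining the two displays proves the claim.

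I expect the one genuinely delicate point to be the hyperbolic case of the middle step. There the natural basis $\{u,v\}$ adapted to the geodesic is Minkowski-orthonormal rather than Euclidean-orthonormal, so the slope $\tanh t$ is not the tangent of the Euclidean angle that appears in the line cross-ratio of Lemma \ref{lemma:fundamentaltheorem}. The resolution — and the crux of the argument — is precisely that the cross-ratio of four concurrent lines is a projective invariant of $\mathbb{P}(\Pi)$ and hence unaffected by any linear automorphism of $\Pi$, so we may legitimately read off the slopes in the skew basis. On the sphere this subtlety evaporates, because arc length equals the central angle $\angle AOC$, and the pencil cross-ratio is then literally $\frac{\sin\angle AOC}{\sin\angle AOD}\cdot\frac{\sin\angle BOD}{\sin\angle BOC}$, which matches $(ABCD)_{\S^2}$ directly.
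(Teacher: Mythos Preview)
Your argument is correct and takes a genuinely different route from the paper's. The paper first reduces to a special plane $w_1$ (tangent to the model at a point $P$ \emph{off} the geodesic), then applies Lemma~\ref{lemma:fundamentaltheorem} twice, once in $\S^2$/$\H^2$ and once in $\E^2$, with the same pivot $P$; the bridge is that angles at a point of tangency agree in the model and in the tangent plane. This forces a reduction step to reach a general $w$, and in the hyperbolic case a continuity argument when $P=(0,0,1)$ happens to lie on the geodesic. You instead stay in the $2$-plane $\Pi$ through the origin that contains $\ell$, take the origin itself as pivot, and equate both cross-ratios to the pencil cross-ratio $(\line{OA}\,\line{OB}\,\line{OC}\,\line{OD})$: Lemma~\ref{lemma:fundamentaltheorem} handles the Euclidean side along $\Pi\cap w$, while an arc-length parametrization plus the addition formulas handles the non-Euclidean side. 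Your approach is more direct---it treats every $w$ at once and needs no limiting argument---at the cost of a short coordinate computation; the paper's approach is purely synthetic and showcases the angle-preservation of tangent planes. The one point you should be most explicit about is precisely the one you flag: that the Euclidean pencil cross-ratio is a $\mathrm{PGL}(\Pi)$-invariant, which justifies reading it off in the Minkowski-orthonormal basis. This is immediate from the determinant formula $(\ell_1\ell_2\ell_3\ell_4)=\dfrac{[v_1,v_3][v_2,v_4]}{[v_1,v_4][v_2,v_3]}$, or equivalently from Lemma~\ref{lemma:fundamentaltheorem} itself (the pencil cross-ratio equals the point cross-ratio on any transversal, which is projectively invariant); either remark would make your hyperbolic step airtight.
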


\begin{proof} First, observe that we only need to prove this for some specially selected plane $w_1$ not containing the origin. Then, the central projection $\pi:\S^2(\H^2)\to w$ for any other plane $w$ is the composition $\pi_2\circ\pi_1$, where $\pi_1:\S^2(\H^2)\to w_1$ and $\pi_2:w_1\to w$ are both central projections from the origin. The latter projection will preserve the cross-ratio of the image points by Theorem \ref{thm:EuclideanInvariance}, so we only need to prove the former projection also preserves the cross-ratio. We will choose different planes $w_1$ for spherical and hyperbolic geometry, so we treat these cases separately.

\emph{Spherical case} Let $A$, $B$, $C$, and $D$ be our four collinear points in $\S^2$ so that no two of them are antipodal. By exchanging points with their antipodes if necessary, we can assume that $A$, $B$, $C$, and $D$ lie in an open hemisphere whose pole does not lie on the geodesic; these exchanges will preserve both the cross-ratio of the four points by Lemma \ref{AntipodeIndependence} and the resulting projection. Then, let $P$ be the pole of that open hemisphere, and let $w_1$ be the tangent plane to $\S^2$ at $P$. (See Figure \ref{fig:SphereEuclidProjection}).

\begin{figure}
    \centering
    \includegraphics[width=\textwidth]{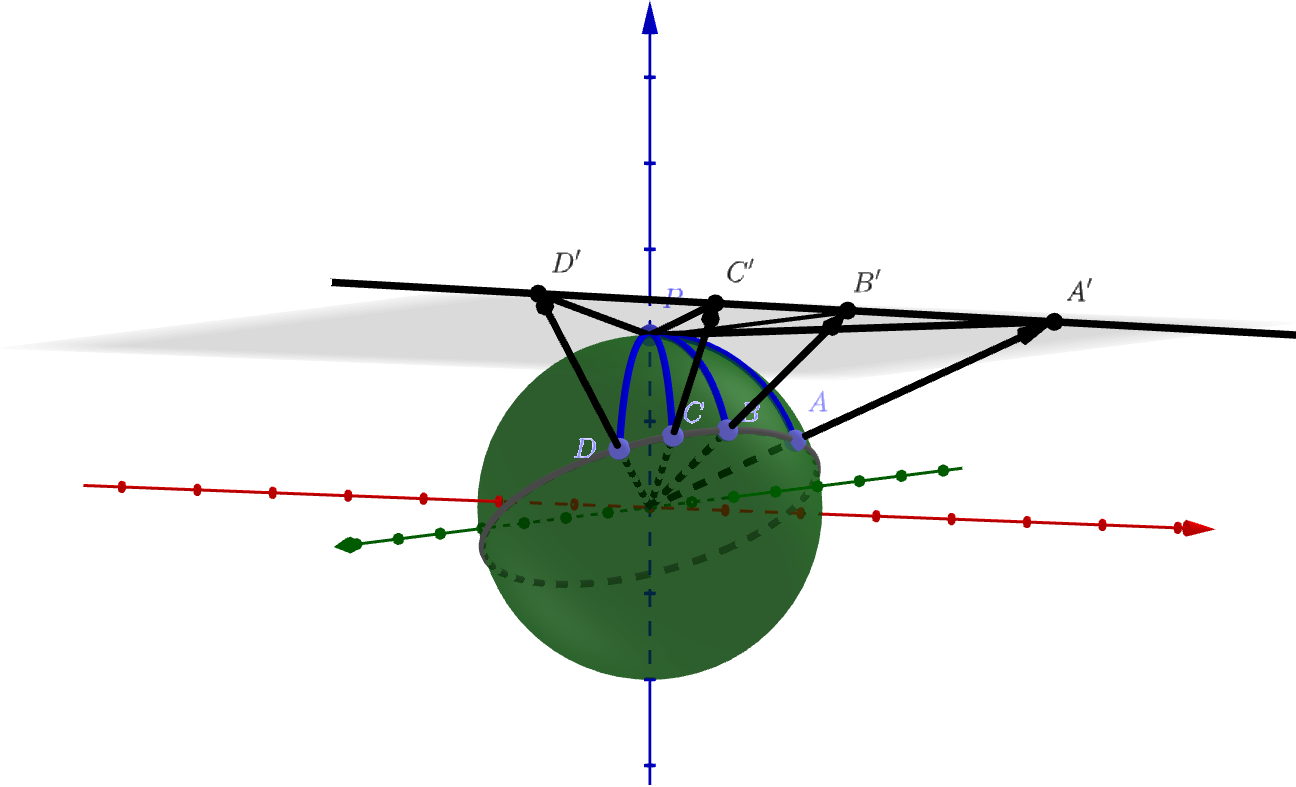}
    \caption{$A,B,C$ and $D$ are projected onto the plane tangent at $P$.}
    \label{fig:SphereEuclidProjection}
\end{figure}

Now, if $A'$, $B'$, $C'$, and $D'$ are the projections onto $w_1$, observe that by definition, $\angle_{\S^2} APC=\angle_{\E^2} A'PC'$ as $w_1$ is the tangent plane at $P$. Similarly, $\angle_{\S^2} BPC=\angle_{\E^2} B'PC'$ and so on. Consequently, if $\ell_1=\line{AP}$, $\ell_2=\line{BP}$, $\ell_3=\line{CP}$ $\ell_4=\line{DP}$, and $\ell_1'=\line{A'P}$, $\ell'_2=\line{B'P}$, $\ell'_3=\line{C'P}$ and $\ell'_4=\line{D'P}$, then $\angle\ell_i\ell_j=\angle\ell_i'\ell_j'$. Thus, $(\ell_1\ell_2\ell_3\ell_4)=(\ell_1'\ell_2'\ell_3'\ell_4')$, so we can use Lemma \ref{lemma:fundamentaltheorem} to conclude that
\[(ABCD)_{\S^2}=(\ell_1\ell_2\ell_3\ell_4)=(\ell_1'\ell_2'\ell_3'\ell_4')=(A'B'C'D')_{\E^2}.\]

\emph{Hyperbolic Case} Let $w_1$ be the plane $z=1,$ which is the tangent plane to the hyperboloid at $P=(0,0,1)$. First, suppose that the line $\line{ABCD}$ does not pass through $P$. The plane $w_1$, while embedded in Minkowski $3$-space, has a Euclidean metric (as $dz=0$, so the metric is $ds^2=dx^2+dy^2$), so by identical logic as the $\S^2$ case, $\angle_{\H^2} APC=\angle_{\E^2} A'PC'$. The same logic holds for the other angles, so by Lemma \ref{lemma:fundamentaltheorem},
\[(ABCD)_{\H^2}=(\line{AP}\ \line{BP}\ \line{CP}\ \line{DP}) = (\line{A'P}\ \line{B'P}\ \line{C'P}\ \line{D'P}) = (A'B'C'D')_{\E^2}.\]

Finally, suppose that $P=(0,0,1)$ lies on $\line{ABCD}$. Consider smooth maps $A_x,B_x,C_x,D_x:(-\epsilon,\epsilon)\to\H^2$ that approach $A$, $B$, $C$, and $D$ respectively as $x\to 0$, so that $A_x,B_x,C_x$ and $D_x$ lie on a geodesic not passing through $P$ for $x\neq 0$. Since the cross-ratio is a continuous function, $(A_xB_xC_xD_x)\to (ABCD)$ as $x\to 0$. Similarly, $(A'_xB'_xC'_xD'_x)\to (A'B'C'D')$ as $x\to 0$. However, for $x\neq 0$, we can use the preceding discussion to conclude that $(A_xB_xC_xD_x)=(A_x'B_x'C_x'D_x')$. Then, by a continuity argument, \[(ABCD)=(A'B'C'D').\] 
The theorem holds.
\end{proof}

This theorem admits two immediate corollaries:

\begin{corollary}
    If $\ell_1$, $\ell_2$, $\ell_3$, and $\ell_4$ are four concurrent lines in $\H^2$ or $\S^2$ and $\ell_i'$ is the image of $\ell_i$ under central projection from the origin to a plane $w$
    \[(\ell_1\ell_2\ell_3\ell_4)=(\ell_1'\ell_2'\ell_3'\ell_4')_{\E^2}.\]
\end{corollary}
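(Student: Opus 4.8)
\emph{The plan is} to reduce the cross-ratio of four concurrent lines to the cross-ratio of four collinear points by cutting the lines with a transversal, and then to invoke the point-version of the theorem we have already proved. Concretely, let $P$ denote the common point of $\ell_1,\ell_2,\ell_3,\ell_4$ in $\S^2$ (or $\H^2$). Central projection preserves incidence, so the images $\ell_1',\ell_2',\ell_3',\ell_4'$ all pass through $P'=\pi(P)$ and are therefore concurrent in $\E^2$; thus the Euclidean line cross-ratio $(\ell_1'\ell_2'\ell_3'\ell_4')_{\E^2}$ is meaningful. The whole argument is then just a chain of three equalities, the first and last supplied by Lemma \ref{lemma:fundamentaltheorem} (once in the non-Euclidean geometry and once in $\E^2$, which is the $K=0$ instance of the same lemma), and the middle one supplied by Theorem \ref{CrossLineCrossPoints}.

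\emph{First I would} choose a transversal geodesic $m$ that meets $\ell_1,\ell_2,\ell_3,\ell_4$ in four distinct, pairwise non-antipodal points $A,B,C,D$, none of them equal to $P$, all lying where central projection to $w$ is defined. Since $P\notin m$ and $\ell_i=\line{AP},\line{BP},\line{CP},\line{DP}$, Lemma \ref{lemma:fundamentaltheorem} gives $(\ell_1\ell_2\ell_3\ell_4)=(ABCD)$. Next, setting $A'=\pi(A)$ and so on, incidence-preservation puts each $A'$ on $\ell_i'$, and collinearity-preservation puts $A',B',C',D'$ on the image line $\pi(m)$; Theorem \ref{CrossLineCrossPoints} then yields $(ABCD)_{\S^2(\H^2)}=(A'B'C'D')_{\E^2}$. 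Finally, applying Lemma \ref{lemma:fundamentaltheorem} inside $\E^2$ with vertex $P'$ and transversal $\pi(m)$ gives $(A'B'C'D')_{\E^2}=(\ell_1'\ell_2'\ell_3'\ell_4')_{\E^2}$. Stringing these together produces
\[
(\ell_1\ell_2\ell_3\ell_4)=(ABCD)=(A'B'C'D')_{\E^2}=(\ell_1'\ell_2'\ell_3'\ell_4')_{\E^2},
\]
which is exactly the claim.

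\emph{The hard part will be} the degenerate configurations rather than the main computation. One must justify that a transversal $m$ with all the required genericity (four distinct, pairwise non-antipodal intersection points distinct from $P$, all projecting into $w$) actually exists; because the result is independent of the choice of $m$ by the perspectivity corollary, we are free to pick a convenient one, and a small perturbation argument removes any coincidental antipodality in the spherical case, appealing to Lemma \ref{AntipodeIndependence} if we must swap a point for its antipode. The genuinely delicate case is when the line through the origin and $P$ is parallel to $w$, so that $P'$ does not exist and the images $\ell_i'$ are four \emph{parallel} lines. There the line cross-ratio in $\E^2$ should be read as the common cross-ratio induced on any transversal, and one recovers the identity either by interpreting $P'$ as the point at infinity of the pencil or, more safely, by a continuity argument that tilts $w$ slightly (as in the final paragraph of the proof of Theorem \ref{CrossLineCrossPoints}) and passes to the limit. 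I expect this limiting/parallel case to be the only step requiring real care; everything else is a direct application of the lemmas already in hand.
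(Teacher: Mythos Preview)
Your argument is correct and is exactly the derivation the paper has in mind: the paper states this result as an ``immediate corollary'' of Theorem \ref{CrossLineCrossPoints} without writing out a proof, and your chain --- cut by a transversal, apply Lemma \ref{lemma:fundamentaltheorem} in $\S^2$/$\H^2$, apply Theorem \ref{CrossLineCrossPoints}, then apply Lemma \ref{lemma:fundamentaltheorem} in $\E^2$ --- is precisely the intended unpacking. Your treatment of the degenerate case where $P'$ lands at infinity (parallel images) via continuity is a sensible addition that the paper does not spell out.
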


While we used a weakened version of this corollary to prove Theorem \ref{CrossLineCrossPoints}, this is not immediately be obvious, especially in hyperbolic geometry. The left hand side features angles that are measured in the Minkowski metric, while angles on the right-hand side are measured in the Euclidean metric! This suggests that the cross-ratio of four angles in Minkowski geometry is equal to the cross-ratio of the same four angles in Euclidean geometry, which is not obvious.

\begin{corollary}
    Let $A$, $B$, $C$, $D$ be any four points on a line in the Beltrami Klein Model. Then, if we let ${AB}$ be the signed euclidean distance between two points, and $AB$ be the signed hyperbolic distance,
    \[\frac{{AC}}{{AD}}\cdot\frac{{BD}}{{BC}}=\frac{\sinh{AC}}{\sinh{AD}}\cdot\frac{\sinh{BD}}{\sinh{BC}}.\]
\end{corollary}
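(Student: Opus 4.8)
The plan is to recognize this corollary as essentially a specialization of Theorem \ref{CrossLineCrossPoints} to the plane $w=\{z=1\}$, combined with the observation that the Beltrami–Klein metric is defined precisely so that central projection from the hyperboloid is an isometry. In effect, there is nothing new to compute once the various distances are correctly identified; the content is in lining up the two definitions of the cross-ratio.

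First I would set up the correspondence of points. The Beltrami–Klein model is by construction the image of the hyperboloid under central projection $\pi:\H^2\to\{z=1\}$ from the origin, so the four collinear points $A$, $B$, $C$, $D$ of the model are the images of four collinear, pairwise non-antipodal points $\tilde A$, $\tilde B$, $\tilde C$, $\tilde D$ on the hyperboloid. Collinearity of the preimages is guaranteed because $\pi$ sends geodesics to geodesics, and the preimages exist because the given points are already images in the plane. I would then apply Theorem \ref{CrossLineCrossPoints} with $w=\{z=1\}$ to obtain
\[(\tilde A\tilde B\tilde C\tilde D)_{\H^2}=(ABCD)_{\E^2}.\]
The right-hand side unpacks, by the Euclidean definition, to $\tfrac{AC}{AD}\cdot\tfrac{BD}{BC}$ with Euclidean distances measured in the plane $z=1$ — exactly the left-hand side of the corollary. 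The left-hand side unpacks, by the non-Euclidean definition with $K=-1$ (so that $\gsin=\sinh$), to $\tfrac{\sinh(\tilde A\tilde C)}{\sinh(\tilde A\tilde D)}\cdot\tfrac{\sinh(\tilde B\tilde D)}{\sinh(\tilde B\tilde C)}$, where the arguments are hyperbolic distances measured on the hyperboloid.

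The one step requiring care — a conceptual point rather than a genuine obstacle — is the identification of distances. I must invoke that $\pi$ is an isometry from the hyperboloid model onto the Beltrami–Klein model, which is exactly how the Beltrami–Klein metric is defined; consequently the hyperbolic distance $\tilde A\tilde C$ equals the hyperbolic distance $AC$ assigned to the image points in the model, giving $\sinh(\tilde A\tilde C)=\sinh(AC)$ and similarly for the remaining three segments. Substituting these identifications into the two unpacked cross-ratios turns the equality furnished by Theorem \ref{CrossLineCrossPoints} into precisely the claimed identity. A final minor check is that the signs agree: both cross-ratios are computed with respect to the same orientation of the line, and the earlier remark on orientation-independence of the cross-ratio ensures this choice is immaterial, completing the argument.
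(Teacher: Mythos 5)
Your proposal is correct and follows essentially the same route as the paper: both identify the Beltrami--Klein points as central-projection images of points on the hyperboloid, note that the hyperbolic distances in the model agree with those of the preimages, and then apply Theorem \ref{CrossLineCrossPoints} with $w=\{z=1\}$. The paper's version is just a terser statement of the same argument.
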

\begin{proof}
    As noted earlier, central projection of the hyperboloid model onto the plane $z=1$ creates the Beltrami Klein model. The hyperbolic distance corresponds to the distance of the pre-image of the four points, while the Euclidean distances correspond to the image under the projection, so we can apply our theorem.
\end{proof}

Beyond the immediate corollaries noted above, this section illustrates how to use Theorem \ref{CrossLineCrossPoints} as a tool to prove a large class of theorems directly. Because central projection preserves incidence, geodesics, conics, and a notion of the cross-ratio, any theorem that only depends on those properties is, with minor modifications, still true in spherical and hyperbolic geometry. 

For instance, we can generalize Chasles's theorem for conics: 
\begin{figure}[H]
\begin{center}
    \includegraphics[scale=0.4]{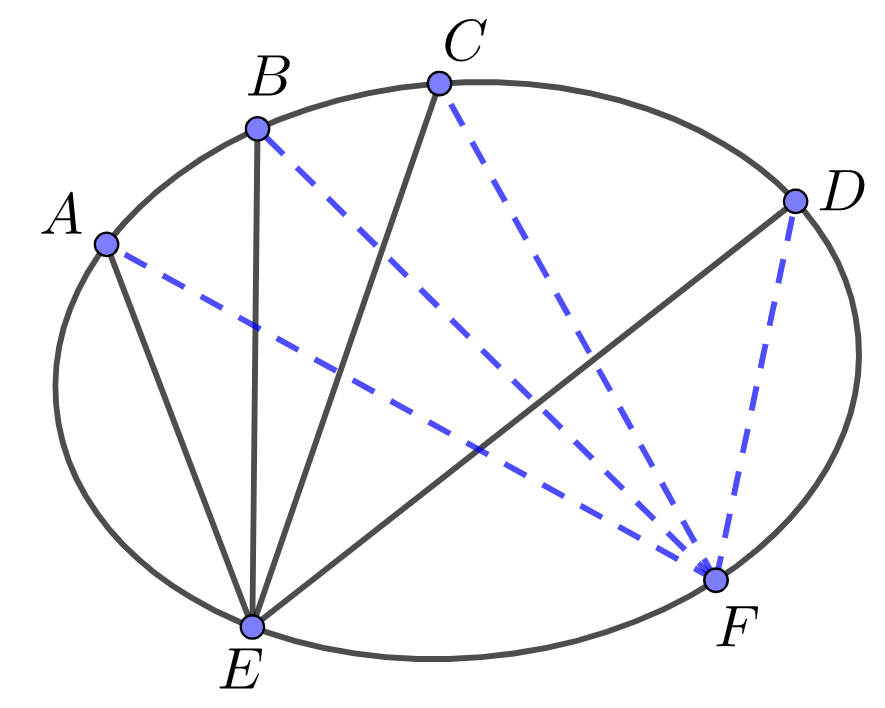}
    \caption{The cross-ratio of the four solid lines from $E$ is the same as the cross-ratio of the four dashed lines from $F$}
    \label{fig:chasles}
\end{center}
\end{figure}
\begin{theorem}
    Let $A$, $B$, $C$, $D$, $E$, and $F$ be any six points on a non-degenerate conic in $\E^2$ (see figure \ref{fig:chasles}). Then,
    \[(\line{AE}\line{BE}\line{CE}\line{DE})=(\line{AF}\line{BF}\line{CF}\line{DF}).\]
\end{theorem}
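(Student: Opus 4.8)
The plan is to reduce to the case where the conic is a circle and then invoke the law of sines. Because the statement concerns only the cross-ratio of a pencil of concurrent lines, and this cross-ratio is preserved by central projection between planes (Theorem \ref{thm:EuclideanInvariance} together with Lemma \ref{lemma:fundamentaltheorem}: intersect the pencil with a transversal, apply Theorem \ref{thm:EuclideanInvariance} to the four collinear intersection points, and translate back), and because central projection sends conics to conics, the entire configuration may be transported by a projective transformation. Since every non-degenerate conic in $\E^2$ is the image of a circle under such a transformation, it suffices to prove the identity when $A,B,C,D,E,F$ lie on a circle of some radius $R$.

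First I would transport the configuration: let $\phi$ be a projective transformation (a composition of central projections between planes) carrying the given conic to a circle, and let $A_0=\phi(A),\dots,F_0=\phi(F)$ be the images. Since $\phi$ preserves incidence, $\phi(\line{EA})=\line{E_0A_0}$ and so on, and since $\phi$ preserves the cross-ratio of a pencil,
\[(\line{AE}\line{BE}\line{CE}\line{DE})=(\line{A_0E_0}\line{B_0E_0}\line{C_0E_0}\line{D_0E_0}),\]
with the same identity holding for $F$. Thus the problem is reduced to the circle.

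For the circle, the key observation is the circumradius form of the law of sines: for any point $E_0$ on a circle of radius $R$ and any two further points $A_0,C_0$ on it, the triangle $\triangle A_0E_0C_0$ is inscribed in that circle, so $\sin\angle A_0E_0C_0 = \frac{A_0C_0}{2R}$, where $A_0C_0$ denotes the chord length. Substituting the four such identities into
\[(\line{A_0E_0}\line{B_0E_0}\line{C_0E_0}\line{D_0E_0})=\frac{\sin\angle A_0E_0C_0}{\sin\angle A_0E_0D_0}\cdot\frac{\sin\angle B_0E_0D_0}{\sin\angle B_0E_0C_0},\]
every factor of $2R$ cancels, leaving $\frac{A_0C_0}{A_0D_0}\cdot\frac{B_0D_0}{B_0C_0}$, an expression in the fixed chords joining $A_0,B_0,C_0,D_0$. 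This quantity does not involve $E_0$ at all, so it equals the corresponding expression computed from $F_0$, which settles the circle case and hence the theorem.

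The main obstacle is the bookkeeping of signs and orientations. The law of sines supplies only the unsigned magnitude $|\sin\angle A_0E_0C_0|$, whereas the paper's cross-ratio uses oriented angles, and the orientation of the pencil seen from $E_0$ may differ from that seen from $F_0$ when the two points lie on opposite arcs determined by $A_0,\dots,D_0$. I would resolve this with the directed-angle form of the inscribed angle theorem, $\angle(\line{E_0A_0},\line{E_0C_0})\equiv\angle(\line{F_0A_0},\line{F_0C_0}) \pmod{\pi}$, checking that the induced sign changes among the four factors occur in cancelling pairs; alternatively, since the cross-ratio varies continuously as $E_0$ traverses an arc while its magnitude stays equal to the constant chord expression above, a continuity argument shows it cannot change sign except where a line of the pencil degenerates, pinning down the value on each arc.
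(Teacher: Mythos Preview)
The paper does not actually prove this theorem. Chasles's theorem is stated as a known Euclidean fact and then immediately transported to $\S^2$ and $\H^2$ via central projection in the following Corollary; no argument for the $\E^2$ statement itself appears. So there is nothing in the paper to compare your proof against.

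On its own merits, your argument is the standard one and is sound in outline: reduce to a circle by a projective transformation (using that the cross-ratio of a pencil is preserved under central projection between planes, and that conics go to conics), then apply the circumradius form of the law of sines to rewrite the pencil cross-ratio from $E_0$ as the chord-only quantity $\dfrac{A_0C_0\cdot B_0D_0}{A_0D_0\cdot B_0C_0}$, which is visibly independent of the viewing point.

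The one place that needs tightening is exactly where you say it does. Your continuity argument by itself is not enough: the circle with $A_0,B_0,C_0,D_0$ removed has four arcs, and continuity only pins the sign on each arc separately, so you would still owe a check that the sign does not flip from arc to arc. Your first fix is the right one and in fact dissolves the issue entirely. The oriented angle between two \emph{lines} is only defined modulo $\pi$, and the directed-angle inscribed angle theorem gives $\angle(\line{E_0X},\line{E_0Y})\equiv\angle(\line{F_0X},\line{F_0Y})\pmod{\pi}$ for any $X,Y$ on the circle. Hence each of the four oriented line-angles in the pencil from $E_0$ literally equals the corresponding angle from $F_0$, so the four sines match term by term and no cancellation-in-pairs argument is needed.
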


As this only depends on conics and the cross-ratio, we can immediately generalize this to hyperbolic and spherical geometry:

\begin{corollary}
    Theorem 9 holds in $\S^2$ and $\H^2$.
\end{corollary}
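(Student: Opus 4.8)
The plan is to reduce the non-Euclidean statement to the Euclidean version via central projection, using the machinery developed in the preceding theorems. First I would fix a non-degenerate conic $\mathcal{C}$ in $\S^2$ (or $\H^2$) together with the six points $A,B,C,D,E,F$ lying on it. By exchanging points with their antipodes if necessary (in the spherical case), I can arrange that all six points, along with the relevant intersection configuration, lie in a single open hemisphere, so that a central projection $\pi$ onto a suitable plane $w$ is well defined on all of them. In the hyperbolic case I would simply use the Beltrami--Klein plane $z=1$. The key structural fact, already established in the excerpt, is that central projection sends conics to conics and preserves incidence, so the images $A',B',\dots,F'$ are six points on a genuine Euclidean conic $\pi(\mathcal{C})$, which is non-degenerate because $\pi$ is a diffeomorphism on the hemisphere and hence cannot collapse a non-degenerate cone.

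Next I would translate the two cross-ratios of concurrent lines. The pencils of lines through $E$ (namely $\line{AE},\line{BE},\line{CE},\line{DE}$) and through $F$ are concurrent families in $\S^2$ or $\H^2$. Applying the first corollary to Theorem~\ref{CrossLineCrossPoints} to each pencil, I get
\[
(\line{AE}\,\line{BE}\,\line{CE}\,\line{DE})=(\line{A'E'}\,\line{B'E'}\,\line{C'E'}\,\line{D'E'})_{\E^2}
\]
and the analogous identity for the pencil through $F$. This step is the crux: it says the non-Euclidean angular cross-ratio of each pencil equals the Euclidean cross-ratio of its projected image pencil, which is exactly what lets me pass the hypothesis of Chasles's theorem through the projection.

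With both pencils transported to the Euclidean plane, I would invoke Theorem~9 (the Euclidean Chasles theorem) applied to the six points $A',B',C',D',E',F'$ on the non-degenerate conic $\pi(\mathcal{C})$, which yields
\[
(\line{A'E'}\,\line{B'E'}\,\line{C'E'}\,\line{D'E'})_{\E^2}=(\line{A'F'}\,\line{B'F'}\,\line{C'F'}\,\line{D'F'})_{\E^2}.
\]
Chaining the three equalities then gives the desired identity in $\S^2$ or $\H^2$. The main obstacle I anticipate is the bookkeeping around the hemisphere restriction and antipodes: I must check that the antipodal exchanges needed to fit all six points into one hemisphere do not disturb the line cross-ratios (this follows from Lemma~\ref{AntipodeIndependence}, or more directly from the fact that a line and its antipode determine the same great circle and hence the same angles) and that the projection is simultaneously defined on $E$, $F$, and all six points. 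A secondary technical point, parallel to the $P$-on-line case in Theorem~\ref{CrossLineCrossPoints}, is a possible degenerate configuration where a line of a pencil fails to project; but since $\pi$ is a diffeomorphism on the open hemisphere this cannot occur once the points are placed in the hemisphere, so I expect this to be routine rather than genuinely difficult.
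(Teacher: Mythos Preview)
Your proposal is correct and follows exactly the approach the paper intends: the paper does not even write out a proof, simply noting that since Chasles's theorem depends only on conics and the cross-ratio of concurrent lines, both of which are preserved under central projection (Theorem~\ref{CrossLineCrossPoints} and its first corollary), the result transfers immediately to $\S^2$ and $\H^2$. Your write-up is a faithful and more detailed unpacking of that one-line argument, including the hemisphere/antipode bookkeeping the paper handles elsewhere.
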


Moreover, we can also use this to prove theorems whose proofs (but not their statements) depend only on these tools. For instance, we can extend the beautiful Butterfly theorem into spherical and hyperbolic geometry by proving it through cross-ratios:

\begin{theorem}[Butterfly Theorem] Let $PQ$, $AB$, and $CD$ be chords on a conic in $\S^2$, $\E^2$, or $\H^2$ so that $AB$ and $CD$ pass through the midpoint $M$ of $PQ$. Then, if $AD$ and $BC$ intersect $PQ$ at $X$ and $Y$ respectively, $M$ is the midpoint of $XY$. \end{theorem}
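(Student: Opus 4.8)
The plan is to reproduce the classical cross-ratio proof of the Butterfly theorem, but to carry it out entirely in the generalized-sine formalism rather than by transporting the Euclidean statement through central projection. The reason is that the hypothesis and the conclusion both speak of a \emph{midpoint}, which is a metric notion and is not preserved by central projection, so the statement cannot be imported from $\E^2$. What \emph{can} be imported is the projective scaffolding of the proof, since that rests only on Chasles's theorem and on the invariance of the cross-ratio, both of which we have already established in $\S^2$ and $\H^2$.

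First I would name the six conic points $P,Q,A,B,C,D$, with $\line{AB}$ and $\line{CD}$ the chords through $M$ and $\line{PQ}$ the chord bisected by $M$. Viewing the four points $P,Q,A,C$ from the two further conic points $B$ and $D$ and applying the spherical/hyperbolic form of Chasles's theorem yields
\[
(\line{PB}\ \line{QB}\ \line{AB}\ \line{CB}) = (\line{PD}\ \line{QD}\ \line{AD}\ \line{CD}).
\]
I would then cut each pencil by the geodesic $\line{PQ}$ and invoke Lemma \ref{lemma:fundamentaltheorem}. Because $\line{AB}$ and $\line{CD}$ both meet $\line{PQ}$ at $M$, while $\line{CB}$ meets it at $Y$ and $\line{AD}$ at $X$, the two pencils cut out the point-quadruples $(P,Q,M,Y)$ and $(P,Q,X,M)$, so the displayed identity becomes the single equation $(PQMY)=(PQXM)$ on the line $\line{PQ}$.

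The genuinely non-projective step is to extract ``$M$ bisects $XY$'' from this one equation together with the hypothesis $PM=MQ$. Here I would orient $\line{PQ}$ with $M$ at coordinate $0$, so that $P$ and $Q$ sit at $-t$ and $+t$ and $X,Y$ at $\xi,\eta$. Expanding $(PQMY)$ and $(PQXM)$ by the definition of the cross-ratio and using that $\gsin$ is odd (so the $\gsin(\pm t)$ factors cancel), the equation collapses to
\[
\frac{\gsin(\eta-t)}{\gsin(\eta+t)} = \frac{\gsin(\xi+t)}{\gsin(\xi-t)}.
\]
Setting $f(s)=\gsin(s-t)/\gsin(s+t)$ and using oddness once more, the right-hand side is exactly $f(-\xi)$, so the equation reads $f(\eta)=f(-\xi)$. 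A short computation with the appropriate angle-subtraction identity in each geometry (ordinary for $\S^2$, hyperbolic for $\H^2$, linear for $\E^2$) gives $f'(s)=\gsin(2t)/\gsin^2(s+t)$, which has constant sign away from the zeros of $\gsin(s+t)$. Thus $f$ is strictly monotone on each such interval, forcing $\eta=-\xi$, i.e.\ $M$ is the midpoint of $XY$.

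The main obstacle I expect is precisely this last analytic step, not the projective part. Two points need care: confirming that the subtraction identity and the resulting formula for $f'$ hold uniformly across the three values of the curvature $K$, and---more delicately---ensuring that $\eta$ and $-\xi$ lie on the same monotone branch of $f$, which in $\S^2$ amounts to controlling the zeros of $\gsin(s+t)$. I would settle the branch question by restricting to the region actually occupied by the configuration (the chords confine $X$ and $Y$ to a single such interval) and, if a boundary case remains, by the same continuity/perturbation argument used at the close of the proof of Theorem \ref{CrossLineCrossPoints}.
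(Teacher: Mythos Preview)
Your proposal is correct and follows essentially the same route as the paper: project four conic points from two others via Chasles, cut the resulting pencils by $\line{PQ}$ using Lemma~\ref{lemma:fundamentaltheorem}, and reduce to a single cross-ratio identity on $\line{PQ}$. The only cosmetic difference is that the paper takes $A$ and $C$ as the Chasles pivots (viewing $P,Q,B,D$) and arrives at $(PQMX)=(PQYM)$, whereas you take $B$ and $D$ (viewing $P,Q,A,C$) and obtain the equivalent $(PQMY)=(PQXM)$.

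Where you genuinely diverge is in the final extraction of $XM=MY$. The paper simply cancels $\gsin PM$ against $\gsin QM$, writes the equation as $\dfrac{\gsin QX}{\gsin PX}=\dfrac{\gsin PY}{\gsin QY}$, and asserts the conclusion in one line. Your approach---setting $f(s)=\gsin(s-t)/\gsin(s+t)$, computing $f'(s)=\gsin(2t)/\gsin^2(s+t)$, and invoking strict monotonicity to force $\eta=-\xi$---is more explicit and actually fills a step the paper leaves to the reader. In $\E^2$ and $\H^2$ this injectivity is immediate since $\gsin(s+t)$ never vanishes; your caution about the spherical branch (and the fallback to a continuity argument as in Theorem~\ref{CrossLineCrossPoints}) is appropriate, though in practice the configuration confines $X,Y$ to the open arc between $P$ and $Q$ so the issue does not arise.
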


\begin{center}
\includegraphics[scale=0.5]{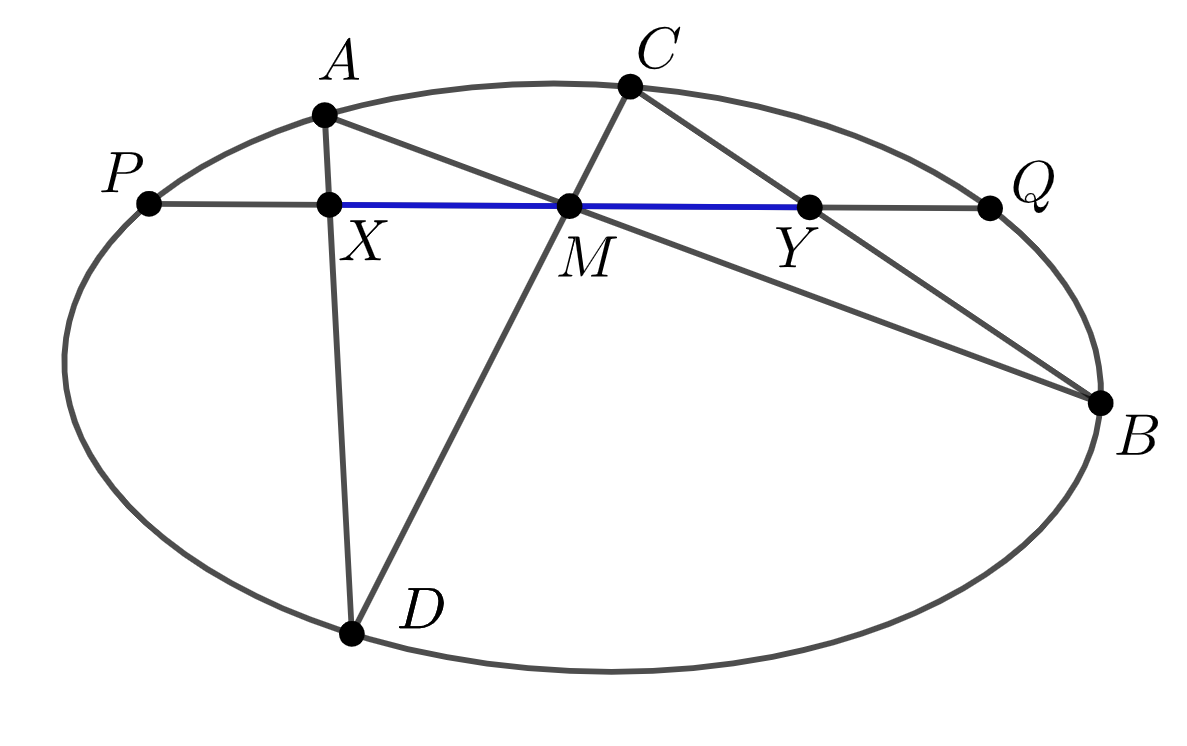}
\end{center}

\begin{proof} We adapt the proof given in \cite{CrossRatioReference} for Euclidean geometry. Observe that 
\begin{align*}
    (PQMX)&=(\line{AP}\line{AQ}\line{AB}\line{AD}) \tag{\text{Lemma }\ref{lemma:fundamentaltheorem}} \\
    &=(\line{CP}\line{CQ}\line{CB}\line{CD}) \tag{\text{Chasles's Theorem}} \\
    &=(PQYM) \tag{\text{Lemma }\ref{lemma:fundamentaltheorem}}.
\end{align*}
Then, $(PQMX)=\frac{\gsin{PM}}{\gsin{PX}}\cdot\frac{\gsin{QX}}{\gsin{QM}}=\frac{\gsin{QX}}{\gsin{PX}}$ as $PM=QM$. Similarly, $(PQYM)=\frac{\gsin{PY}}{\gsin{QY}}$. But, this implies that $XM=MY$ as desired.
\end{proof}

There are many generalizations of the Butterfly Theorem, and we could use similar ideas to painlessly generalize them to $\S^2$ and $\H^2$ as well.

\section{From Cross-ratios to Carnot's Theorem}

The goal of this section is to illustrate the power of this extension of the cross-ratio by using to prove a generalization of Carnot's Theorem for conics which was introduced in the introduction. 
Another tool we will need is Menelaus's Theorem:

\begin{theorem} Let $\triangle ABC$ be a triangle in $\S^2$, $\E^2$, or $\H^2$ (see figure \ref{fig:MenelausandCarnotRewrite}). Then, if points $A_\ell$, $B_\ell$, and $C_\ell$ 
lie on $\line{BC}$, $\line{AC}$, and $\line{AB}$ respectively, $A_\ell$, $B_\ell$, and $C_\ell$ are collinear if and only if
\[\frac{\gsin(AC_\ell)}{\gsin(C_\ell B)}\cdot\frac{\gsin(BA_\ell)}{\gsin(A_\ell C)}\cdot\frac{\gsin(CB_\ell)}{\gsin(B_\ell A)}=-1,\]
where $\gsin(P_1P_2)$ represents the generalized sign of the oriented distances between $P_1$ and $P_2$. \end{theorem}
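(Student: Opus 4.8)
The plan is to prove this directly from the generalized law of sines in Equation~\eqref{LawOfSines}, in exactly the spirit of the proof of Lemma~\ref{lemma:fundamentaltheorem}, so that all three geometries are handled at once by the single function $\gsin$. Assume first that $A_\ell$, $B_\ell$, and $C_\ell$ are collinear, lying on a transversal geodesic $\ell$, and orient all segments. The key move is to decompose the configuration into the three triangles $\triangle C A_\ell B_\ell$, $\triangle A B_\ell C_\ell$, and $\triangle B C_\ell A_\ell$, each having one vertex of $\triangle ABC$ as its apex and its opposite side lying along $\ell$. Applying the law of sines to each of these triangles expresses the six quantities $\gsin(A_\ell C)$, $\gsin(C B_\ell)$, $\gsin(B_\ell A)$, $\gsin(A C_\ell)$, $\gsin(C_\ell B)$, and $\gsin(B A_\ell)$ in terms of the sines of the angles that the triangles make at the transversal points $A_\ell$, $B_\ell$, and $C_\ell$.

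Multiplying the three resulting relations, I would show that their product is, up to sign, the reciprocal of the Menelaus product $\frac{\gsin(AC_\ell)}{\gsin(C_\ell B)}\cdot\frac{\gsin(BA_\ell)}{\gsin(A_\ell C)}\cdot\frac{\gsin(CB_\ell)}{\gsin(B_\ell A)}$. The decisive simplification is that the two angles occurring at each transversal point are vertical angles: one is formed by a side of $\triangle ABC$ with $\ell$, the other is its opposite across the crossing. Their sines are therefore equal and cancel in pairs, forcing the magnitude of the Menelaus product to be $1$. The sign is then pinned down separately: a transversal geodesic meets the three side-lines so that an odd number of the division points fall outside their segments, making an odd number of the signed ratios negative, so the product is exactly $-1$. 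Alternatively, since central projection is one-to-one on an open hemisphere and hence preserves the order of collinear points, each signed ratio carries the same sign as its Euclidean image, and the sign can be read off from the classical Euclidean statement.

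For the converse I would argue by uniqueness. Suppose the product equals $-1$; let $\ell$ be the geodesic through $A_\ell$ and $B_\ell$, and let $C_\ell'$ be its intersection with $\line{AB}$. The forward direction applied to $A_\ell$, $B_\ell$, $C_\ell'$ gives a Menelaus product of $-1$, and comparing with the hypothesis yields $\frac{\gsin(AC_\ell)}{\gsin(C_\ell B)}=\frac{\gsin(AC_\ell')}{\gsin(C_\ell' B)}$. Because the map $X\mapsto \gsin(AX)/\gsin(XB)$ is injective as $X$ ranges over the relevant portion of $\line{AB}$, this forces $C_\ell'=C_\ell$, so $C_\ell\in\ell$ and the three points are collinear.

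The main obstacles I anticipate are bookkeeping rather than conceptual. Verifying the vertical-angle cancellation requires matching the angles correctly and invoking ``vertical angles are equal'' legitimately in $\S^2$ and $\H^2$ (which is valid, as angles are measured infinitesimally in the tangent plane). The more delicate points are the sign determination and, in the converse, the injectivity of $X\mapsto \gsin(AX)/\gsin(XB)$: on the sphere this needs restriction to a suitable arc to avoid the periodicity of $\sin$ and the antipodal ambiguity, which is precisely where the triangle conventions fixed in Section~1 are used.
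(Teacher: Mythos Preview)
Your plan is sound, but note that the paper does not actually prove this theorem in-line: it simply cites \cite{VolumePrinciple} for ``a lovely proof.'' That reference establishes Menelaus (and Ceva) in all three geometries via a signed-volume/determinant principle, working with the ambient $\R^3$ coordinates of points on the sphere or hyperboloid. Your route is genuinely different and more in keeping with the paper's own toolkit: you stay intrinsic, applying the generalized law of sines \eqref{LawOfSines} to the three sub-triangles $\triangle A B_\ell C_\ell$, $\triangle B C_\ell A_\ell$, $\triangle C A_\ell B_\ell$ and cancelling via vertical angles at the transversal points, exactly parallel to the proof of Lemma~\ref{lemma:fundamentaltheorem}. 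This would make the argument self-contained within the paper. The trade-off is the one you already flag: the volume-principle approach handles signs and the converse uniformly through the multilinear algebra, whereas you must track orientations by hand and, for the converse, invoke monotonicity of $X\mapsto \gsin(AX)/\gsin(XB)$ along $\line{AB}$. That monotonicity does hold on any arc where $\gsin(XB)\neq 0$, since the derivative works out to $\gsin(AB)/\gsin^2(XB)$, which has constant sign; so your converse goes through once the spherical domain is restricted as you indicate.
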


One can find a lovely proof of this result in \cite{VolumePrinciple}. Observe that the above ratio is preserved if we 
replace any point in the arrangement with its antipode. For instance, if we replace $A_1$ with its antipode $A_1^\ast$,
then $BA_1^\ast=\pi\pm BA_1$, and $A_1C^\ast=\pi\pm A_1C$. Consequently, $\gsin(BA_1^\ast)=-\gsin(BA_1)$ and 
$\gsin(A_1^\ast C)=-\gsin(A_1C)$, and the negative signs cancel each other out.

\begin{figure}
    \centering
    \includegraphics[width=\textwidth]{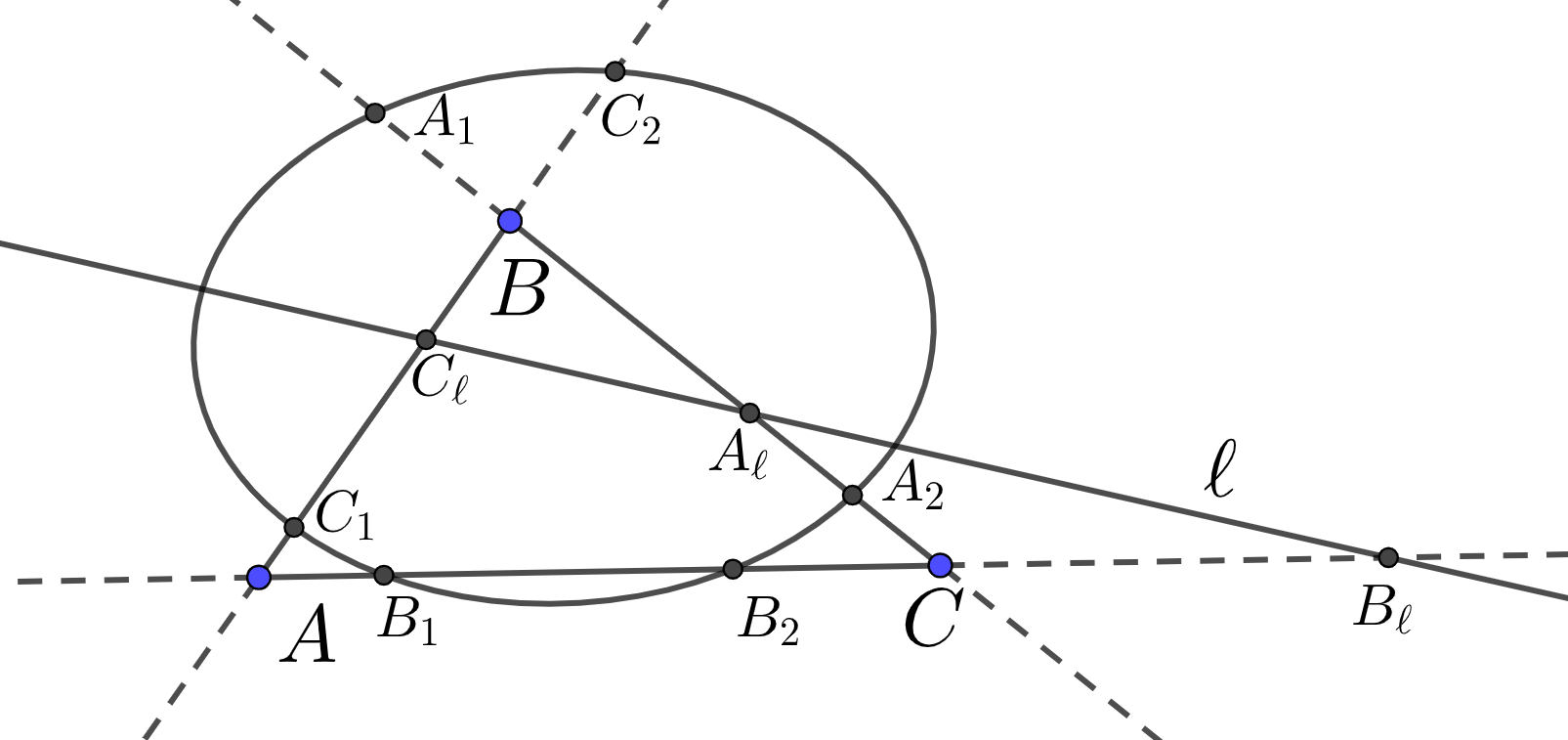}
    \caption{Menelaus's Theorem and Lemma \ref{lemma:carnotrewrite}}
    \label{fig:MenelausandCarnotRewrite}
\end{figure}

With this tool, we are ready to state our generalization of Carnot's Theorem:

\begin{theorem} \label{GeneralCarnotThm} Let $ABC$ be an arbitrary triangle in $\S^2$, $\R^2$, or $\H^2$. Let $A_1$, $A_2$; $B_1$, $B_2$, and $C_1$; $C_2$ be points on the sides of oriented lines $\line{BC}$, $\line{AC}$, and $\line{AB}$ respectively. Then, $A_1$, $A_2$; $B_1$, $B_2$; and $C_1$, $C_2$ lie on a conic if and only if
\begin{equation} \label{GeneralCarnot}
\frac{\gsin(AC_1)}{\gsin(C_1B)}\cdot\frac{\gsin(AC_2)}{\gsin(C_2B)}\cdot\frac{\gsin(BA_1)}{\gsin(A_1C)}\cdot\frac{\gsin(BA_2)}{\gsin(A_2C)}\cdot\frac{\gsin(CB_1)}{\gsin(B_1A)}\cdot\frac{\gsin(CB_2)}{\gsin(B_2A)} = 1.
\end{equation}
\end{theorem}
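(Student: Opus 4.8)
The plan is to reduce the statement entirely to the classical Carnot's Theorem in $\E^2$ (equation~\eqref{EuclidCarnot}) by central projection. I would fix a plane $w$ exactly as in Theorem~\ref{CrossLineCrossPoints} (the tangent plane at the pole of a suitable hemisphere in $\S^2$, or $z=1$ in $\H^2$) and let $A',B',C'$ and $A_1',\ldots,C_2'$ be the images of the triangle and the six points. Since central projection preserves incidence and carries conics to conics, the six points lie on a conic in $\S^2$ or $\H^2$ if and only if their images lie on a Euclidean conic, and by Carnot's Theorem in $\E^2$ this happens exactly when the Euclidean product $\Pi_{\E}=\frac{\overline{A'C_1'}}{\overline{C_1'B'}}\cdots\frac{\overline{C'B_2'}}{\overline{B_2'A'}}$ equals $1$. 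So the whole theorem follows once I show that the non-Euclidean product $\Pi$ on the left of \eqref{GeneralCarnot} equals $\Pi_{\E}$; this equality is the content of the auxiliary Lemma~\ref{lemma:carnotrewrite}.

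The subtle point is that the naive hope — that projection preserves each factor $\frac{\gsin(AC_1)}{\gsin(C_1B)}$ — is false, since a single ratio of two collinear segments is not a projective invariant. The fix is to compare the two factors living on the same side. On the line $\line{AB}$, writing $r(X)=\frac{\gsin(AX)}{\gsin(XB)}$ and $r'(X')=\frac{\overline{A'X'}}{\overline{X'B'}}$, the quotient $\frac{r(X_1)}{r(X_2)}$ is exactly the cross-ratio $(ABX_1X_2)$ (using that $\gsin$ is odd), which by Theorem~\ref{CrossLineCrossPoints} equals $\frac{r'(X_1')}{r'(X_2')}$. Hence the discrepancy $r(X)/r'(X')$ is a constant $\lambda_{AB}$ as $X$ ranges over $\line{AB}$, and likewise one obtains constants $\lambda_{BC},\lambda_{CA}$ for the other two sides — constants depending only on the triangle and the projection, not on where the six points sit. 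Substituting $r(X)=\lambda_{AB}r'(X')$ and the analogues into $\Pi$ and grouping the two points on each side gives
\[
\Pi=(\lambda_{AB}\lambda_{BC}\lambda_{CA})^2\,\Pi_{\E}.
\]

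It then remains to prove that the constant $c=(\lambda_{AB}\lambda_{BC}\lambda_{CA})^2$ equals $1$, and this is where the main work lies. Because $c$ is independent of the positions of $A_1,\ldots,C_2$, I would evaluate it on one convenient configuration: choose a transversal $\ell$ meeting $\line{BC},\line{CA},\line{AB}$ at $A_1,B_1,C_1$ and a second transversal $m$ meeting them at $A_2,B_2,C_2$. For this choice the six factors of $\Pi$ regroup into the two Menelaus products of $\ell$ and of $m$, each equal to $-1$ by Menelaus's Theorem (valid in all three geometries), so $\Pi=(-1)(-1)=1$; the identical computation in $\E^2$ gives $\Pi_{\E}=1$. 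Therefore $c=\Pi/\Pi_{\E}=1$, and since $c$ did not depend on the configuration, $\Pi=\Pi_{\E}$ in general. Combining this with the conic-preservation of central projection and Euclidean Carnot closes the equivalence.

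The hardest conceptual step is recognizing that the individual factors are not preserved while their full product is, and organizing this through the per-side constants $\lambda_L$; pleasingly, one never has to compute these constants, since Menelaus pins down their product for free. The remaining care is technical: in $\S^2$ I would first replace points by antipodes so that everything lies in an open hemisphere whose pole avoids all three side-lines, using Lemma~\ref{AntipodeIndependence} together with the antipode-invariance of the Menelaus ratio already noted; in $\H^2$ I would project to $z=1$ and, should a side pass through $(0,0,1)$, argue by the same limiting device used in Theorem~\ref{CrossLineCrossPoints}. One must also check that generic transversals $\ell,m$ meet all three side-lines at admissible (projecting, distinct, pairwise non-antipodal) points, which is automatic off a measure-zero set of choices.
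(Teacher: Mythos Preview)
Your argument is correct and follows the same overall strategy as the paper: centrally project to a plane, invoke the Euclidean Carnot theorem there, and bridge the non-Euclidean and Euclidean Carnot products using cross-ratio invariance together with Menelaus. The organization of that bridge is where you differ. The paper's Lemma~\ref{lemma:carnotrewrite} picks a \emph{single} transversal $\ell$, multiplies the Carnot product $\Pi$ by the square of its Menelaus product (which is $(-1)^2=1$), and regroups the result as a product of six genuine cross-ratios; Theorem~\ref{CrossLineCrossPoints} then gives $\Pi=\Pi_{\E}$ in one stroke. You instead introduce the per-side discrepancy constants $\lambda_{AB},\lambda_{BC},\lambda_{CA}$, obtain $\Pi=(\lambda_{AB}\lambda_{BC}\lambda_{CA})^2\Pi_{\E}$, and then evaluate the constant on a configuration built from \emph{two} transversals. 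Your framing makes explicit why individual ratios fail to be projective invariants while the full product is; the paper's route is a line shorter. Note, incidentally, that one transversal already suffices in your scheme: a single Menelaus line contributes one point per side, so comparing the two Menelaus identities gives $\lambda_{AB}\lambda_{BC}\lambda_{CA}=(-1)/(-1)=1$ directly, without needing the second line $m$.
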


Before we prove the theorem, we first prove a lemma rewriting the equation in terms of cross-ratios:

\begin{lemma}\label{lemma:carnotrewrite}
    Suppose $\ell$ is a line not passing through any of $A$, $B$, or $C$. Then, if $\ell$ intersects the sides of the triangle at $A_\ell$, $B_\ell$, and $C_\ell$, the Equation \eqref{GeneralCarnot} is equivalent to
    \[(ABC_\ell C_1)(ABC_\ell C_2)(BCA_\ell A_1)(BCA_\ell A_2)(CAB_\ell B_1)(CAB_\ell B_2)=1.\]
\end{lemma}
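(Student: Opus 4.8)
The plan is to expand every cross-ratio appearing in the claimed identity using the definition $(WXYZ)=\frac{\gsin(WY)}{\gsin(WZ)}\cdot\frac{\gsin(XZ)}{\gsin(XY)}$, regroup the resulting factors into those attached to the three points $A_\ell,B_\ell,C_\ell$ of $\ell$ and those attached to $A_1,A_2,B_1,B_2,C_1,C_2$, and then invoke Menelaus's Theorem to collapse the $\ell$-contribution to $1$. What remains will be exactly the reciprocal of the left-hand side of Equation \eqref{GeneralCarnot}, which immediately yields the asserted equivalence.

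Concretely, I would first handle the two cross-ratios attached to side $\line{AB}$. Since $A$, $B$, $C_\ell$, $C_1$, $C_2$ are all collinear on $\line{AB}$, expanding gives
\[
(ABC_\ell C_1)(ABC_\ell C_2) = \frac{\gsin(AC_\ell)^2}{\gsin(BC_\ell)^2}\cdot\frac{\gsin(BC_1)}{\gsin(AC_1)}\cdot\frac{\gsin(BC_2)}{\gsin(AC_2)}.
\]
Using that $\gsin$ is odd, so $\gsin(C_1B)=-\gsin(BC_1)$ and likewise for $C_2$, the last two factors are precisely the reciprocal of the ``$C$-block'' $\frac{\gsin(AC_1)}{\gsin(C_1B)}\cdot\frac{\gsin(AC_2)}{\gsin(C_2B)}$ of Equation \eqref{GeneralCarnot}, the two introduced sign changes cancelling. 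The identical computation on sides $\line{BC}$ and $\line{CA}$ produces the reciprocals of the $A$- and $B$-blocks, each carrying an extra factor, namely $\frac{\gsin(BA_\ell)^2}{\gsin(CA_\ell)^2}$ and $\frac{\gsin(CB_\ell)^2}{\gsin(AB_\ell)^2}$ respectively.

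Multiplying the three pairs together, the full product of the six cross-ratios equals
\[
\left(\frac{\gsin(AC_\ell)}{\gsin(BC_\ell)}\cdot\frac{\gsin(BA_\ell)}{\gsin(CA_\ell)}\cdot\frac{\gsin(CB_\ell)}{\gsin(AB_\ell)}\right)^{2}\cdot\frac{1}{L},
\]
where $L$ denotes the left-hand side of Equation \eqref{GeneralCarnot}. Because $A_\ell,B_\ell,C_\ell$ all lie on $\ell$, they are collinear, so Menelaus's Theorem applies to $\triangle ABC$; after converting every segment to the uniform orientation used above (again absorbing the three sign flips from the oddness of $\gsin$), the Menelaus relation states exactly that the parenthesized product equals $1$. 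Hence the entire product of cross-ratios equals $1/L$, so it equals $1$ if and only if $L=1$, which is the claimed equivalence.

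The main obstacle is the orientation bookkeeping: one must track the sign of each oriented segment consistently and repeatedly use that $\gsin$ is odd, so that the $\pm$ signs introduced by reversing a segment cancel in pairs rather than accumulating. The one genuinely conceptual step is recognizing that the leftover $\ell$-factors assemble precisely into the squared Menelaus product, which equals $1$ exactly because $\ell$ is a single line meeting all three sides; this is the only place the hypothesis on $\ell$ is used.
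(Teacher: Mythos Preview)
Your argument is correct and is essentially the paper's own proof run in the opposite direction: the paper multiplies Equation~\eqref{GeneralCarnot} by the square of the Menelaus relation and regroups the twelve $\gsin$-factors into the six cross-ratios, whereas you expand the six cross-ratios first and peel off the squared Menelaus product to reveal $1/L$. The only substantive ingredients---the definition of the cross-ratio, the oddness of $\gsin$ for the sign bookkeeping, and Menelaus applied to the transversal $\ell$---are identical in both.
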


\begin{proof}
    This is a straightforward computation (refer to Figure \ref{fig:MenelausandCarnotRewrite}): because $A_\ell$, $B_\ell$, and $C_\ell$ lie on a line, we know that 
    \begin{equation}\label{Menelaus} \frac{\gsin(AC_\ell)}{\gsin(C_\ell B)}\cdot\frac{\gsin(BA_\ell)}{\gsin(A_\ell C)}\cdot\frac{\gsin(CB_\ell)} {\gsin(B_\ell A)}=-1. \end{equation}
    Then, we can multiply \eqref{GeneralCarnot} by the square of \eqref{Menelaus} to get
    \begin{align*}
    1(-1)^2&=\frac{\gsin(AC_1)}{\gsin(C_1B)}\cdot\frac{\gsin(AC_2)}{\gsin(C_2B)}\cdot\frac{\gsin(BA_1)}{\gsin(A_1C)}\cdot\frac{\gsin(BA_2)}{\gsin(A_2C)}\\  &\qquad\cdot\frac{\gsin(CB_1)}{\gsin(B_1A)}\cdot\frac{\gsin(CB_2)}{\gsin(B_2A)} \left(\frac{\gsin(AC_\ell)}{\gsin(C_\ell B)}\cdot\frac{\gsin(BA_\ell)}{\gsin(A_\ell C)}\cdot\frac{\gsin(CB_\ell)} {\gsin(B_\ell A)}\right)^2  \\
    1&=\left(\frac{\gsin(AC_\ell)}{\gsin(AC_1)}\cdot\frac{\gsin(BC_1)}{\gsin(BC_\ell)}\right)\cdot\left(\frac{\gsin(AC_\ell)}{\gsin(AC_2)}\cdot\frac{\gsin(BC_2)}{\gsin(BC_\ell)}\right) \\
    &\qquad \cdot\left(\frac{\gsin(BA_\ell)}{\gsin(BA_1)}\cdot\frac{\gsin(CA_1)}{\gsin(CA_\ell)}\right)  \left(\frac{\gsin(BA_\ell)}{\gsin(BA_2)}\cdot\frac{\gsin(CA_2)}{\gsin(CA_\ell)}\right)\\
    &\qquad\,\cdot \left(\frac{\gsin(CB_\ell)}{\gsin(CB_1)}\cdot\frac{\gsin(AB_1)}{\gsin(AB_\ell)}\right) \cdot \left(\frac{\gsin(CB_\ell)}{\gsin(CB_2)}\cdot\frac{\gsin(AB_2)}{\gsin(AB_\ell)}\right) \\
    1&= (ABC_\ell C_1)(ABC_\ell C_2)(BCA_\ell A_1)(BCA_\ell A_2)(CAB_\ell B_1)(CAB_\ell B_2).
    \end{align*}
    As the steps are reversible, the two equations are equivalent.
\end{proof}

With this lemma, we can now prove Theorem \ref{GeneralCarnotThm}.

\begin{proof}[Proof of Theorem] Our strategy is as follows. First, note the theorem is true in $\E^2$ (\cite{CarnotApplications} gives a proof). In $\S^2$ or $\H^2$, we will project the arrangement into a plane $w$; then we use the information gleaned from the Euclidean Carnot's theorem to prove the theorem.

Suppose we have $\triangle ABC$ and $A_1,A_2,B_1,B_2,C_1,$ and $C_2$ as described in the theorem statement in $\S^2$ or $\H^2$. In $\S^2$, further suppose that all of these points lie in an open hemisphere. If not, take antipodes until they do; antipodal points lie on the same conics, and none of the equations are changed by taking the antipode.

Now, given such points, take some line $\ell$ that does not pass through any of the points, but intersects $\line{BC}$, $\line{AC}$ and $\line{AB}$ at $A_\ell$, $B_\ell$, $C_\ell$ (in $\S^2$, choose the line and intersection points so that they lie in our open hemisphere). Then, we can take this entire arrangement and centrally project it onto the tangent plane $w$ whose point of tangency is the center of the open hemisphere in $\S^2$, or onto the plane $z=1$ in $\H^2$.

Observe then that if $A'$, $B'$, $C'$, $A'_1, A'_2, B'_1, B'_2, C'_1$ and $C'_2$ are the images of the above points, the six points $A'_1, A'_2, B'_1, B'_2, C'_1$ and $C'_2$ lie on a conic if and only if $A_1,A_2,B_1,B_2,C_1,$ and $C_2$ lie on a conic. Since we are now in $\E^2$, we know by Carnot's Theorem that the Equation \eqref{EuclidCarnot} holds if and only if the six points $A'_1, A'_2, B'_1, B'_2, C'_1$ and $C'_2$ lie on a conic.

By our previous lemma, this is true if and only if
\begin{equation*} \label{EuclidenCarnotCross} (A'B'C_\ell' C_1')(A'B'C_\ell' C_2')(B'C'A_\ell' A_1')(B'C'A_\ell' A_2')(C'A'B_\ell' B_1')(C'A'B_\ell' B_2')=1.\end{equation*}

Since all of these points are the images under a central projection, this equation only holds if the corresponding equation in $\S^2$ (or $\H^2$) holds because central projection preserves the cross-ratio as proved in Theorem \ref{CrossLineCrossPoints}. Therefore, the six points $A_1,A_2,B_1,B_2,C_1,$ and $C_2$ lie on a conic if and only if 
\[ (ABC_\ell C_1)(ABC_\ell C_2)(BCA_\ell A_1)(BCA_\ell A_2)(CAB_\ell B_1)(CAB_\ell B_2)=1. \]

By the lemma, this is true if and only if \eqref{GeneralCarnot} holds. Therefore, $A_1,A_2,B_1,B_2,C_1,$ and $C_2$ lie on a conic if and only if the cyclic product relation \eqref{GeneralCarnot} holds, and the theorem follows.
\end{proof}

\section{Higher Degree Curves}
In \cite{CarnotApplications}, the author notes that Carnot's theorem is a generalization of Menelaus's theorem to a curve of degree $2$, and then shows that there is an extension to higher degree curves. We can cleanly generalize that extension to spherical and hyperbolic geometry. 

First, we need to construct a notion of higher degree curves in $\S^2$ or $\H^2$. We note that our definition of both a geodesic (a degree 1 curve) and a conic (a degree 2 curve) consisted of taking the corresponding object on a Euclidean plane and then projecting it onto the sphere (or the hyperboloid). We can do the same thing with a curve of degree $n$:

\begin{definition}
    A curve in $\S^2$ or $\H^2$ has degree $n$ if and only if its central projection onto a plane $w$ is a degree $n$ curve on the plane $w$.
\end{definition}

Then, from \cite{MathEncyclopedia,CarnotApplications} we know that if $\triangle ABC$ has $n$-tuples of points $A_1,\ldots,A_n$, $B_1,\ldots,B_n$, and $C_1,\ldots,C_n$ on the sides $\line{BC}$, $\line{AC}$, and $\line{AB}$ respectively, these $3n$ points lie on a degree $n$ curve if and only if 
\begin{equation}\label{CarnotGeneral}\prod_{k=1}^n \frac{AB_k}{B_kC}\cdot\frac{BC_k}{C_kA}\cdot\frac{CA_k}{A_kB}=(-1)^n.\end{equation}

Then, if we mimic Lemma \ref{lemma:carnotrewrite}, we can introduce a line $\ell$ that does not intersect with any $A_i$, $B_i$, $C_i$. Using Menelaus's Theorem, we can rewrite Equation \eqref{CarnotGeneral} as 
\[\prod\limits_{k=1}^n (ABC_\ell C_k)(BCA_\ell A_k)(CAB_\ell B_k)=(-1)^n.\]

Then, using identical logic to the proof of Carnot's Theorem in the previous section, we can generalize Carnot's theorem as follows:

\begin{theorem}
    Let $\triangle ABC$ be a triangle in $\S^2$, $\E^2$, or $\H^2$. Then, if $A_1,\ldots,A_n$, $B_1\ldots,B_n$, and $C_1,\ldots C_n$ are $n$-tuples of points lying along $\line{BC}$, $\line{AC}$, and $\line{AB}$ respectively, they lie on a curve of degree $n$ if and only if 
    \[\prod_{k=1}^n \frac{\gsin(AB_k)}{\gsin(B_kC)}\cdot\frac{\gsin(BC_k)}{\gsin(C_kA)}\cdot\frac{\gsin(CA_k)}{\gsin(A_kB)}=(-1)^n.\]
\end{theorem}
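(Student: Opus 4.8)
The plan is to replicate, essentially verbatim, the projection argument used for Theorem \ref{GeneralCarnotThm}, now invoking the degree-$n$ Euclidean identity \eqref{CarnotGeneral} and the degree-$n$ cross-ratio rewriting displayed just above in place of their degree-$2$ counterparts. The three ingredients that make this work are all already established: first, central projection carries degree-$n$ curves to degree-$n$ curves bijectively, which is immediate from our very definition of a degree-$n$ curve in $\S^2$ ($\H^2$); second, central projection preserves the non-Euclidean cross-ratio by Theorem \ref{CrossLineCrossPoints}; and third, the Euclidean form of the statement holds by \cite{CarnotApplications,MathEncyclopedia}.

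Concretely, I would first reduce to a convenient configuration. In the spherical case, replace any of the $3n$ points by its antipode until all of them, together with the three vertices, lie in a single open hemisphere. This costs nothing: an antipode substitution sends a factor $\frac{\gsin(CA_k)}{\gsin(A_kB)}$ to $\frac{-\gsin(CA_k)}{-\gsin(A_kB)}$ and hence leaves the left side of the claimed identity unchanged, and it also leaves degree-$n$ curves invariant. Then choose a transversal line $\ell$ meeting $\line{BC}$, $\line{AC}$, and $\line{AB}$ at $A_\ell$, $B_\ell$, $C_\ell$ while avoiding all $3n$ points and all three vertices (in $\S^2$, also inside the hemisphere). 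Applying the Menelaus rewriting already recorded above converts the $\gsin$-product identity of the theorem into the cross-ratio product identity
\[\prod_{k=1}^n (ABC_\ell C_k)(BCA_\ell A_k)(CAB_\ell B_k)=(-1)^n.\]

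Next I would centrally project the whole configuration onto the plane $w$ (the tangent plane at the pole of the hemisphere for $\S^2$, or $z=1$ for $\H^2$), obtaining images $A', B', C'$ and $A_k', B_k', C_k', A_\ell', B_\ell', C_\ell'$. By ingredient (i), the $3n$ image points lie on a degree-$n$ curve in $\E^2$ exactly when the original points do; by the Euclidean theorem together with its own Menelaus rewriting, this happens exactly when the Euclidean cross-ratio product equals $(-1)^n$. By ingredient (ii), each Euclidean cross-ratio equals the corresponding non-Euclidean one, so the Euclidean product identity holds if and only if the non-Euclidean product identity above holds, which by the Menelaus rewriting is in turn equivalent to the claimed $\gsin$-product identity. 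Chaining these equivalences yields the theorem.

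The only step requiring genuine care is the bookkeeping of signs and orientations, now governed by the parity factor $(-1)^n$ rather than the trivial $(-1)^2=1$ of the conic case. One must check that the Menelaus rewriting is applied consistently on both the non-Euclidean and the Euclidean sides so that the two $(-1)^n$ factors agree rather than conflict, and that the transversal $\ell$ can always be selected disjoint from all $3n$ points while remaining inside the hemisphere (in $\S^2$) and while its three intersection points project to finite points of $w$. Once these choices are fixed, everything else is a direct transcription of the degree-$2$ argument, so I expect no further obstacle.
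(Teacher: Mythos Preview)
Your proposal is correct and is essentially the same approach the paper takes: the paper simply says to mimic Lemma~\ref{lemma:carnotrewrite} to obtain the cross-ratio product identity $\prod_{k=1}^n (ABC_\ell C_k)(BCA_\ell A_k)(CAB_\ell B_k)=(-1)^n$ and then to apply ``identical logic to the proof of Carnot's Theorem in the previous section,'' which is precisely the projection-and-chain-of-equivalences argument you outline. Your explicit flagging of the $(-1)^n$ sign bookkeeping and the choice of the transversal $\ell$ is, if anything, more careful than the paper's own sketch.
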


\noindent \textbf{Acknowledgements} This paper came out of work done at the 2022 Grand Val-
ley State University REU and was supported by the NSA [Grant No: H98230-22-
1-0023]. We would also like to thank our advisor, Professor William Dickinson,
for his feedback and support during (and after!) the summer, as well as Derrick
Wu for his comments.

\begin{abstract}
    When considering geometry, one might think of working with lines and circles on a flat plane as in Euclidean geometry. However, doing geometry in other spaces is possible, as the existence of spherical and hyperbolic geometry demonstrates. Despite the differences between these three geometries, striking connections appear among the three. In this paper, we illuminate one such connection by generalizing the cross-ratio, a powerful invariant associating a number to four points on a line, into non-Euclidean geometry.  Along the way, we see how projections between these geometries can allow us to directly export results from one geometry into the others. The paper culminates by generalizing Carnot’s Theorem for Conics -- a classical result relating when six points on a triangle lie on a conic -- into spherical and hyperbolic geometry. These same techniques are then applied to Carnot’s Theorem for higher degree curves.
\end{abstract}

\end{document}